\newtheorem{theorem}{Theorem}[section]
\newtheorem{thm}{Theorem}
\newtheorem{lemma}[theorem]{Lemma}
\theoremstyle{definition}
\newtheorem{rmk}{Remark}
\newcommand{\Bell}{\mathcal{B}_\ell}
\newcommand{\All}{\mathcal{A}_\ell}
\newcommand{\be}{\begin{equation}}
\newcommand{\ee}{\end{equation}}
\newcommand{\bsubeq}{\begin{subequations}}
	\newcommand{\esubeq}{\end{subequations}}
\renewcommand{\div}{\text{div}}
\newcommand{\ds}{\displaystyle}
\newcommand{\calL}{{\mathcal{L}}}
\newcommand{\calR}{{\mathcal{R}}}
\newcommand{\calB}{{\mathcal{B}}}
\newcommand{\calD}{{\mathcal{D}}}
\newcommand{\calA}{{\mathcal{A}}}
\newcommand{\calC}{{\mathcal{C}}}
\newcommand{\calM}{{\mathcal{M}}}
\newcommand{\calS}{{\mathcal{S}}}
\newcommand{\calV}{{\mathcal{V}}}
\newcommand{\BR}{\mathbb{R}}
\newcommand{\BN}{\mathbb{N}}
\newcommand{\wti}{\widetilde}
\newcommand{\bpm}{\begin{pmatrix}}
	\newcommand{\epm}{\end{pmatrix}}
\newcommand{\bbm}{\begin{bmatrix}}
	\newcommand{\ebm}{\end{bmatrix}}
\numberwithin{equation}{section}
\numberwithin{thm}{section}
\numberwithin{rmk}{section}
\numberwithin{prop}{section}
\newcommand{\bs}[1]{\boldsymbol{#1}}
\newcommand\rfrac[2]{{}^{#1}\!/_{#2}}
\newcommand{\norm}[1]{\left\lVert#1\right\rVert}
\newcommand{\abs}[1]{\left\lvert#1\right\rvert}
\newcommand{\ip}[2]{\langle #1, #2 \rangle}
\newcommand{\ipp}[2]{( #1, #2 )}
\newcommand{\nin}{\noindent}
\newcommand{\calP}{\mathcal{P}}
\newcommand{\tcb}[1]{\textcolor{black}{#1}}
\newcommand{\tcr}[1]{\textcolor{black}{#1}}
\begin{document}
	\title{The Existence and Dimension of the Attractor for a 3D Flow of a Non-Newtonian Fluid subject to Dynamic Boundary Conditions \thanks{The authors were supported by the project No. 20-11027X financed by GA\v{C}R.}}
	
	%	\title{Continuous Differentiability of the Value Function of Semilinear Parabolic Infinite Time Horizon Optimal Control Problems on $L^2(\Omega)$ under Control Constraints \thanks{The authors were supported by the ERC advanced grant 668998 (OCLOC) under the EU's H2020 research program.}}
	
	\author{Dalibor Pra\v{z}\'{a}k \thanks{Charles University, Faculty of Mathematics and Physics, Sokolovsk\'{a} 83, Praha, CZ 186 75, Czech Republic. (prazak@karlin.mff.cuni.cz).}
		\and Buddhika Priyasad \thanks{Charles University, Faculty of Mathematics and Physics, Sokolovsk\'{a} 83, Praha, CZ 186 75, Czech Republic. (priyasad@karlin.mff.cuni.cz).}}

	\date{}
	\maketitle
	
	\begin{abstract}
		\nin We consider non-Newtonian incompressible 3D fluid of
		Ladyzhenskaya type, in the setting of the dynamic boundary
		condition. Assuming sufficient growth rate of the stress tensor
		with respect to the velocity gradient, we establish explicit
		dimension \tcb{estimate} of the global attractor in terms of the physical \tcb{parameters} of the problem.
	\end{abstract}
	
	\section{Introduction.}
	
	The existence of global attractor, its finite-dimensionality, and
	possibly even the construction of a finite-dimensional exponential
	attractor belong to prototypical results of the dynamical
	theory of nonlinear evolutionary PDEs. 
	These goals are often attained, as long as the system
	is well-posed and dissipative. The literature being too extensive 
	to quote, let us
	mention the basic monographs 
	\cite{ChVi02},  \cite{CLW:2014}, \cite{IC:1980}, \cite{Ro2011}, \cite{Te97}. On the other hand, an explicit dimension estimate of the
	attractor is a different matter, requiring additional tools from
	functional analysis, and considerably more demanding in view of the
	regularity of the underlying solution semigroup.
	
	\par
	Focusing to the incompressible Navier-Stokes equations as a model
	problem, one can say that in 2D, the problem of the attractor dimension
	is rather well understood. Reasonable upper estimates are available for
	various domains, even unbounded ones, and the results are known to be sharp
	for the torus\tcr{, see} recent paper \cite{IPZ:2016} and the references therein. 
	For the 3D case, weak solutions \tcr{exist} globally, but the uniqueness remains a
	famous open problem even for the torus. One can still define (sort of) an attractor,
	but nothing can be said about its dimension. Consequently, various regularizations of
	the problem, more or less well-motivated physically, have been proposed, for
	which these problems were then successfully addressed, cf. for
	example \cite{IKZ:2022} for the so-called Euler-Bardina
	regularization.
	
	\par
	In the present paper, we consider one such classical modification,
	going back to Ladyzhenskaya \cite{La67}, where additional gradient integrability
	is induced  by a non-linear modification of the viscous stress tensor via the
	$r$-Laplacian type term $\ds \abs{\bs{Du}}^{r-2}\bs{Du}$. Thus, one the one hand, the
	problem becomes well-posed \tcb{in 3D} for values only slightly above the
	NSE-critical value $r=2$. On the other hand, such \tcr{a} highest order
	nonlinearity brings additional complications to the analysis, as in
	particular higher regularity of weak solutions is difficult to obtain in
	dimensions other than two. Note that this so-called Ladyzhenskaya model
	is well-motivated physically \cite{MaRa05}.
	
	\par
	The problem of the attractor dimension, and more generally, the
	structural complexity of the dynamics, is presumable highly sensitive to
	the adopted boundary condition. Motivated by this, we further generalize
	our setting to allow for a non-linear evolution on $\partial\Omega$,
	which is driven by the normal stress force of the fluid, exerted across
	the boundary. Our result is new in particular by providing an explicit 
	(asymptotic) dimension estimate for 3D fully non-linear problem, while remaining
	in the setting of weak solutions only.
	
	\par
	Let us finally mention some related publications and results concerning
	our model, i.e. the Ladyzhenskaya $r$-fluid. For basic existence and
	uniqueness theory of weak solutions under dynamic boundary conditions,
	see recent paper  \cite{ABM:2021}, cf. also \cite{Mar:2019}.
	Existence of finite-dimensional exponential attractors was recently 
	established in a rather general setting, but without explicit dimension estimates
	\cite{PZ:2022b}. Concerning the Dirichlet boundary conditions,
	explicit dimension estimates in 3D setting  were previously
	obtained in \cite{BEKP:2009}, to which the current paper is
	a direct generalization. Improved dimension estimates, based
	on the volume contraction method, were also obtained 
	in the 2D setting by \cite{KaPr:2008}, and for suitably
	regularized problem again in 3D setting \cite{PrZa:2013}.

	\section{\tcr{Formulation of the Problem and the Main Result.} }
	
	We consider generalized Navier-Stokes equations with dynamic
	boundary condition on a bounded domain $\Omega \subset \BR^3,\
	\Omega \in \calC^{0,1}$ and bounded time interval $(0,T)$. We
	denote space-time domain by $Q := (0,T) \times \Omega$, and by
	$\Gamma := (0,T) \times \partial \Omega$ the space-time boundary. 
	We further denote unknown velocity by $v: Q \rightarrow \BR^3$ 
	and unknown pressure of the fluid by $\pi:Q \rightarrow \BR$. 
	The quantity $\bs{S}$ is called the extra stress tensor and
	here it is assumed to be a function of the symmetric velocity
	gradient $2 \bs{Dv} = \nabla \bs{v} + (\nabla \bs{v})^T$. 
	The external body force $\bs{f}:Q \rightarrow \BR^3$ is
	independent of time.
	
	\par
	An essential feature of our model is that we incorporate
	the so-called dynamic boundary condition, so that the
	tangential velocity component is subject to a certain
	non-linear response $\bs{s}=\bs{s}(\bs{v})$ on $\Gamma$. 
	Our system thus reads
	\begin{subequations}\label{nse_L}
		\begin{align}
		\partial_t \bs{v} - \div \ \bs{S} + \div \ (\bs{v} \otimes \bs{v}) + \nabla \pi &= \bs{f} & \text{ in } Q, \label{nse_L-a}\\
		\div \ \bs{v} &= 0 & \text{ in } Q, \label{nse_L-b}\\
		\bs{v} \cdot \bs{n} &= 0 & \text{ on } \Gamma, \label{nse_L-c}\\
		-(\bs{Sn})_{\tau} &= \alpha \bs{s} + \beta \partial_t \bs{v} & \text{ on } \Gamma, \label{nse_L-d}\\
		\bs{v}(0) &= \bs{v}_0 & \text{ in } \Omega \cup \partial \Omega. \label{nse_L-e}
		\end{align}
	\end{subequations}
	Concerning the constitutive functions $\bs{S} = \bs{S}(\bs{D} \bs{v})$ 
	and $\bs{s}=\bs{s}(\bs{v})$, \tcb{we
	assume polynomial growth in terms of certain $r$ and $q\ge2$. More precisely:}
	%\nin We consider the following constitutive relation. 
	%\begin{align}
	%\bs{S}(\bs{D}) &= \nu_1 \bs{D} \bs{v} + \nu_2 \abs{\bs{Dv}}^{r-2} \bs{D} \bs{v} &\text{ in } Q,\\
	%\bs{s} &= \tcr{\bs{s}}(\bs{v}),  &\text{ on } \Gamma,
	%\end{align}
	%where $\tcr{\bs{s}}$ is Lipschitz continuous and monotone \todo{better described later}. 
	for all $\ds \bs{D}_1, \bs{D}_2 \in \BR^{3 \times 3}_{sym}$
	
	\begin{equation}\label{prop_S}
	\begin{aligned}
	\bs{S}(\bs{0}) &= \bs{0},\\
	\abs{\bs{S}(\bs{D}_1) - \bs{S}(\bs{D}_2)} &\leq c_1 \left( \nu_1 + \nu_2 \left( \abs{\bs{D}_1} + \abs{\bs{D}_2} \right)^{r-2} \right) \abs{\bs{D}_1 - \bs{D}_2},\\
	\left(\bs{S}(\bs{D}_1) - \bs{S}(\bs{D}_2)\right):\left(\bs{D}_1 - \bs{D}_2\right) & \geq c_2 \left( \nu_1 + \nu_2 \left( \abs{\bs{D}_1} + \abs{\bs{D}_2} \right)^{r-2} \right) \abs{\bs{D}_1 - \bs{D}_2}^2.
	\end{aligned}	
	\end{equation}
	\nin Furthermore, it is assumed that $\bs{S}$ has a potential,
	\begin{equation}\label{prop_Phi}
	\begin{aligned}
	\bs{S}(\bs{D}) &= \partial_{\bs{D}} \Phi \left( \abs{\bs{D}}^2 \right),\\
	c_3 \left( \nu_1 + \nu_2 \abs{\bs{D}}^{r-2} \right)\abs{\bs{D}}^2 &\leq \Phi \left(\bs{D}\right) \leq c_4 \left( \nu_1 + \nu_2 \abs{\bs{D}}^{r-2} \right)\abs{\bs{D}}^2.
	\end{aligned}
	\end{equation}
	Typical example is the so-called Ladyzhenskaya fluid
	\begin{equation}
	\bs{S}(\bs{D}) = \nu_1 \bs{D} \bs{v} + \nu_2 \abs{\bs{Dv}}^{r-2} \bs{D} \bs{v} 
	\end{equation}
	Regarding the boundary nonlinearity $\bs{s}$, we require that for
	all $\bs{v}_1$, $\bs{v}_2 \in \BR^3$
	\begin{equation}\label{s-prop}
	\begin{aligned}
	\bs{s}(\bs{0}) &= \bs{0},	\\
	\abs{\bs{s}(\bs{v}_1) - \bs{s}(\bs{v}_2)} &\le
	c_5 \abs{\bs{v}_1 - \bs{v}_2}, \\
	\left( \bs{s}(\bs{v}_1) - \bs{s}(\bs{v}_2) \right) \cdot
	\left( \bs{v}_1 - \bs{v}_2 \right) 
	&\ge c_6 \abs{ \bs{v}_1 - \bs{v}_2 }^2,\\
	\tcb{\bs{s}(\bs{v})  \bs{v} } & \tcb{\geq c_7 \left(\abs{\bs{s}}^{\bar{q}} + \abs{\bs{v}}^q\right), \text{ where } \rfrac{1}{q} + \rfrac{1}{\bar q} = 1.}
	\end{aligned}
	\end{equation}
	%Without loss of generality, let $\bs{s}(0)=0$, hence in particular
	%\begin{equation*}
	%\tcr{\bs{s}}(\bs{v}) \cdot \bs{v}  \geq c_3 \bs{v}^2. 
	%\end{equation*}
	%	\nin The computations will be only slightly different due to this modification and essential in constructing the absorbing ball.
	Here, we also impose the existence of a potential, i.e.
	\begin{equation}	\label{pot-s}
	\bs{s}(\bs{v}) = \partial_{\bs{v}} \mathcal{S}(\bs{v})
	\end{equation}
	Without loss of generality, let $\mathcal{S}(\bs{0})=\bs{0}$.
	It is obvious that $\mathcal{S}$ obeys upper and lower
	\tcb{$q$-growth bounds}, in view of \eqref{s-prop}.\\
	\par
	
	%% added text with "Main Theorem" 
	\nin \tcr{Our main result, stated somewhat informally, reads as follows.}\\
	\par
	\nin \tcr{ {\bf Main Theorem.} {\it Let $r>12/5$ and $\bs{f} \in L^2(\Omega)$. Then 
			the system (\ref{nse_L-a} -- \ref{nse_L-e}) has a global attractor in $L^2(\Omega) \times L^2(\partial \Omega)$. Moreover, its dimension can be explicitly estimated in terms of the data.}}\\
	\par
	\tcr{	See Theorem~\ref{thm:main} below for a precise statement and proof. We note that the solutions are not uniquely determined by initial conditions in $L^2$ only. Yet they immediately become more regular (and hence unique), as follows from Theorems~\ref{thm:uniq} and \ref{thm:regdt}. This issue of initial nonuniqueness is easily avoided in our setting of short trajectories.}
	\par
	\tcr{As a by-product of the time regularity, we obtain that the attractor is bounded
		in $W^{1,r}$, and the solutions on attractor are $1/2$-H\"older continuous with
		values in $L^2$. One can expect that additional, i.e. spatial regularity is
		also available, so that the solutions would be in fact strong. We leave this
		problem to the forthcoming paper.}

	%	\nin The Lipschitz continuity of $s^*$ implies such a primitive $\bs{\calS}$ exists.
	\section{\tcr{Well-posedness and Additional Time Regularity.}}
	
	We carry out our analysis with dynamical boundary condition which includes the time derivative of the velocity $\bs{v}$ of the fluid weighted by the parameter $\beta$. This set up demands a specific type function spaces. First we introduce such function spaces and later we define the Gelfand triplet. We essentially follow the functional set up used in \cite[Section 3]{ABM:2021}.\\
	
	\nin For $\Omega$ a Lipschitz domain in $\BR^d$, i.e., $\Omega \in \calC^{0,1}, \ \beta \geq 0$ and $r \in (0,\infty)$, we define $\calV \subset \calC^{0,1}(\overline{\Omega}) \times \calC^{0,1}(\partial \Omega)$ as
	\begin{equation*}
	\calV := \left\{ \left( \bs{v}, \bs{g} \right) \in \calC^{0,1}(\overline{\Omega}) \times \calC^{0,1}(\partial \Omega): \div \ \bs{v} = 0 \text{ in  } \Omega, \ \bs{v} \cdot \bs{n} = 0, \text{ and } \bs{v} = \bs{g} \text{ on } \partial \Omega \right\}
	\end{equation*}
	\nin With the help of $\calV$, we define
	\begin{align}
	V_r &:= \overline{\calV}^{\norm{\cdot}_{V_r}}, \ \text{where} \ \norm{(\bs{v}, \bs{g})}_{V_r} := \norm{\bs{v}}_{W^{1,r}(\Omega)} + \norm{\bs{v}}_{L^2(\Omega)} + \norm{\bs{g}}_{L^2(\partial \Omega)},\\
	H &:= \overline{\calV}^{\norm{\cdot}_{H}}, \ \text{where} \ \norm{(\bs{v}, \bs{g})}^2_{H} := \norm{\bs{v}}^2_{L^2(\Omega)} + \beta \norm{\bs{g}}^2_{L^2(\partial \Omega)}
	\end{align}
	\tcb{Note that $H$ is a Hilbert space with respect to the above norm.
	    We also remark that if $(\bs{v},\bs{g}) \in V_r$, then necessarily 
	    $\bs{g} = tr \ \bs{v}$. With some abuse of notation, $V_r$ can thus 
	    be identified with its first component $v$. }
	%	\nin For further insight about these spaces, we refer to \cite[Section 3]{Mar:2019}. Define space $V$.\\
	
	\begin{thm}\label{thm-wp}
		Let $\bs{v}_0 \in H$, $\bs{f} \in L^{r'}(0,T;V'_r)$, $T > 0$ be given, 
		and let $r \geq \rfrac{11}{5}$. 
		Then there exists at least one weak solution $\bs{v}$ to \eqref{nse_L},
		\begin{equation}
		\begin{aligned}
		\bs{v} &\in L^{\infty}(0,T;H) \cap L^r(0,T;V_r),\\
		\partial_t \bs{v} &\in L^{r'}(0,T;V'_r).
		\end{aligned}		
		\end{equation}
		The solution satisfies energy equality, and the initial
		condition $\bs{v}(0) = \bs{v}_0$ holds for the representative 
		$\bs{v} \in C([0,T];H)$.
	\end{thm}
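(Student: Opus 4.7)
The plan is to construct a weak solution via a Galerkin approximation, derive uniform energy-type a priori estimates, and pass to the limit by combining compactness (for the convective term) with Minty's monotonicity trick (for the nonlinear stress and boundary terms). The threshold $r \geq 11/5$ will enter at one specific place: when estimating the convective term as an element of $V_r'$.

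First, choose a Schauder basis $\{\bs{w}_k\}_{k\geq 1} \subset \calV$ adapted to the dynamic boundary condition (e.g.\ eigenfunctions of a Stokes-type operator as in \cite{ABM:2021}), and let $\bs{v}^n(t) = \sum_{k=1}^n c_k^n(t)\bs{w}_k$ solve the associated projected ODE system. Testing with $\bs{v}^n$ itself, the convective term vanishes by $\div \bs{v}^n = 0$ and $\bs{v}^n \cdot \bs{n} = 0$, the pressure drops, and the time derivative pairs correctly with both the interior component and the boundary trace to produce $\tfrac{1}{2} \tfrac{d}{dt} \|\bs{v}^n\|_H^2$. Together with the coercivity in \eqref{prop_S} and \eqref{s-prop}, this yields the uniform bound $\bs{v}^n \in L^\infty(0,T;H) \cap L^r(0,T;V_r)$, which in turn gives global existence of the Galerkin system.

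The delicate step is bounding $\partial_t \bs{v}^n$ uniformly in $L^{r'}(0,T;V_r')$. The Sobolev embedding $W^{1,r}(\Omega) \hookrightarrow L^{3r/(3-r)}(\Omega)$ (for $r<3$) together with the standard parabolic interpolation
\[
L^\infty(0,T;L^2(\Omega)) \cap L^r(0,T;L^{3r/(3-r)}(\Omega)) \hookrightarrow L^{5r/3}(Q)
\]
yields $\bs{v}^n \in L^{5r/3}(Q)$ uniformly. Bounding the convective term by $\bigl| \int_\Omega (\bs{v}^n \otimes \bs{v}^n):\nabla \phi \bigr| \leq \|\bs{v}^n\|_{L^{2r'}(\Omega)}^2 \|\nabla \phi\|_{L^r(\Omega)}$ and using H\"older in time, one sees that $\div(\bs{v}^n \otimes \bs{v}^n) \in L^{r'}(0,T;V_r')$ precisely when $2r' \leq 5r/3$, i.e.\ $r \geq 11/5$. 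Combined with the growth bounds from \eqref{prop_S} and \eqref{s-prop} on the remaining terms, this gives the required estimate on $\partial_t \bs{v}^n$. Aubin--Lions compactness then extracts a subsequence $\bs{v}^n \to \bs{v}$ strongly in $L^r(0,T;L^2(\Omega))$ and a.e.\ in $Q$, enough to pass the convective term to the limit; the nonlinear $\bs{S}$ and $\bs{s}$, for which only weak convergence of $\nabla \bs{v}^n$ and of the trace of $\bs{v}^n$ is available, are identified by Minty's monotonicity trick using the strict monotonicity in \eqref{prop_S} and \eqref{s-prop}.

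Since the limit satisfies $\bs{v} \in L^r(0,T;V_r) \cap L^\infty(0,T;H)$ with $\partial_t \bs{v} \in L^{r'}(0,T;V_r')$, it is itself an admissible test function, and the standard Lions-type lemma on the Gelfand triple $V_r \hookrightarrow H \hookrightarrow V_r'$ delivers the continuous-in-time representative $\bs{v} \in C([0,T];H)$, the energy equality, and the initial condition $\bs{v}(0) = \bs{v}_0$. The hard part will be the sharp handling of the convective term: at $r = 11/5$ the interpolation leaves no slack, and one must simultaneously check that the duality in $V_r'$ couples the interior velocity and the boundary trace in the right way, so that the dynamic condition \eqref{nse_L-d} is recovered in the weak limit alongside the interior equation.
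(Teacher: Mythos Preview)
Your proposal is correct and follows essentially the same approach as the paper: Galerkin approximation, energy estimates yielding the $L^\infty(0,T;H)\cap L^r(0,T;V_r)$ bound, the parabolic interpolation $L^{5r/3}(Q)$ to control the convective term in $L^{r'}(0,T;V_r')$ exactly at the threshold $r\ge 11/5$, followed by Aubin--Lions compactness and the Minty monotonicity trick, with the energy equality and $C([0,T];H)$ representative recovered via the Gelfand triple. The paper's own proof is in fact only a sketch that defers the details to \cite{ABM:2021}, so your write-up is more explicit than what appears there, but the strategy is identical.
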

	\begin{proof}
		We only sketch the proof, referring to \cite{ABM:2021} for details.
		Take the scalar product of \eqref{nse_L-a} with an arbitrary
		$\bs{\varphi} \in V_r$, integrate the result over $\Omega$, and use integration by parts to obtain
		\begin{equation}
		\int_{\Omega} [\partial_t \bs{v} \cdot \bs{\varphi} + (\bs{S} - \bs{v} \otimes \bs{v}): \nabla \bs{\varphi} - \pi \div  \bs{\varphi}] \ dx + \int_{\partial \Omega} \left[\tcb{\pi} \bs{I} + \bs{v} \otimes \bs{v} - \bs{S} \right] \bs{n} \cdot \bs{\varphi} \ dS = \int_{\Omega} \bs{f} \cdot \bs{\varphi} \ dx
		\end{equation}
		\nin By utilizing the symmetry of $\bs{S}$, \eqref{nse_L-c}, \eqref{nse_L-d}, and the properties of $\bs{\varphi}$ ($\div \bs{\varphi} = 0$ in $\Omega$, $\bs{\varphi} \cdot \bs{n} = 0$ on $\partial \Omega$), we deduce the weak formulation
		\begin{multline}\label{wk-form}
		\int_{\Omega} \partial_t \bs{v} \cdot \bs{\varphi} \ dx + \beta \int_{\partial \Omega} \partial_t \bs{v} \cdot \bs{\varphi} \ dS  + \int_{\Omega} \left[ \tcb{\bs{S} (\bs{Dv})} - \bs{v} \otimes \bs{v} \right]: \nabla \bs{\varphi} \ dx + \alpha \int_{\partial \Omega} \tcr{\bs{s}}(\bs{v}) \cdot \bs{\varphi} \ dS \\ = \int_{\Omega} \bs{f} \cdot \bs{\varphi} \ dx
		\end{multline}
		\nin Formally, we set $\bs{\varphi} := \bs{v}$ in \eqref{wk-form}, and use 
		\begin{align}
		\int_{\Omega} (\bs{v} \otimes \bs{v}): \nabla \bs{v} dx &= \int_{\Omega} \sum_{i,j = 1}^d\bs{v}_i\bs{v}_j \partial_i\bs{v}_jdx = \frac{1}{2} \int_{\Omega} \sum_{i,j = 1}^d \bs{v}_i \partial_i\abs{\bs{v_j}}^2dx,\\
		&= \frac{1}{2} \left( -\int_{\Omega} \div \bs{v} \abs{\bs{v}}^2 dx + \int_{\partial \Omega} \bs{v} \cdot \bs{n} \abs{\bs{v}}^2 dS \right) = 0,
		\end{align}
		where we have used \eqref{nse_L-b}, \eqref{nse_L-c}. Thus we obtain,
		\begin{equation} 	\label{ener-eq}
		\frac{1}{2}\tcr{\frac{d}{dt}}\left(\int_{\Omega}  \abs{\bs{v}}^2 \ dx + \beta \int_{\partial \Omega} \abs{\bs{v}}^2 \ dS \right)  
		+ \int_{\Omega} \tcb{\bs{S}(\bs{Dv}):\bs{Dv}}  \ dx + \alpha \int_{\partial \Omega} \tcr{\bs{s}}(\bs{v}) \cdot \bs{v} \ dS = \tcb{\ip{\bs{f}}{\bs{v}}_{V_r',V_r}}. 
		\end{equation}	
		\nin \tcb{For the right hand side of \eqref{ener-eq}, we obtain by utilizing Korn's and Young's inequalities, 
		\begin{align*}
		\ip{\bs{f}}{\bs{v}}_{V_r',V_r} \leq \norm{\bs{f}}_{V_r'} \norm{\bs{v}}_{V_r} \leq c_1 \norm{\bs{f}}_{V_r'} \norm{\bs{v}}_{W^{1,r}} &\leq c_2(\varepsilon) \norm{\bs{f}}^{r'}_{V_r'} + \frac{\varepsilon}{c_3}\norm{\bs{v}}^r_{W^{1,r}}, \ (\varepsilon > 0)\\
		&\leq c_2(\varepsilon) \norm{\bs{f}}^{r'}_{V_r'} + \varepsilon\norm{\bs{Dv}}^r_r + \varepsilon\norm{\bs{v}}^r_{H}.
		\end{align*}}
		Then by \tcb{\eqref{prop_S} and} \eqref{s-prop}, we deduce,
		\begin{equation}
		\frac{1}{2} \frac{d}{dt} \norm{\bs{v}}^2_{H} + c_5 \left[\nu_1 \norm{\bs{Dv}}^2_2 + \nu_2 \norm{\bs{Dv}}^r_r\right] \tcb{ + c_4\alpha \norm{\bs{v}}_{L^q(\Gamma)}^q} \leq c_2(\varepsilon) \norm{\bs{f}}^{r'}_{V_r'} + \varepsilon\norm{\bs{v}}^r_{H}.
		\end{equation}
		We combine compactness and monotonicity arguments to obtain the
		existence of a solution as a limit of a suitable approximate
		problem,  e.g. the Galerkin scheme. Remark that $r = \rfrac{11}{5}$ is the
		critical value which ensures that the convective term belongs
		to the proper dual space. Hence in particular, any weak
		solution is an admissible test function and the energy equality
		\eqref{ener-eq} holds. See \cite{Mar:2019} or \cite{ABM:2021}.
	\end{proof}
	
	Weak solutions are non-unique in general, unless additional
	regularity is assumed. In particular, analogously to 
	\cite[Theorem 3.2]{BEKP:2009}, one proves:
	\begin{thm}	\label{thm:uniq}
		Let $\bs{u}, \bs{v}$ be weak solutions with $\bs{u}(0) = \bs{v}(0)$, and furthermore, let $\ds \bs{v} \in L^{\frac{2r}{2r - 3}}(0,T;V_r)$. Then $\bs{u} = \bs{v}$. 
	\end{thm}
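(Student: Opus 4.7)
The plan is a monotonicity plus Gronwall argument applied to the difference $\bs{w} := \bs{u} - \bs{v}$. By Theorem~\ref{thm-wp} both solutions satisfy the energy equality, so one can subtract the two weak formulations and, via the standard duality pairing (legitimate since $\bs{w} \in L^r(0,T;V_r)$ and $\partial_t\bs{w} \in L^{r'}(0,T;V_r')$), use $\bs{w}$ itself as a test function to obtain
\begin{equation*}
\tfrac{1}{2}\tfrac{d}{dt}\|\bs{w}\|_H^2 + \int_\Omega (\bs{S}(\bs{Du}) - \bs{S}(\bs{Dv})){:}\bs{Dw}\,dx + \alpha\int_{\partial\Omega}(\bs{s}(\bs{u}) - \bs{s}(\bs{v})){\cdot}\bs{w}\,dS = -\int_\Omega (\bs{u}\otimes\bs{u} - \bs{v}\otimes\bs{v}){:}\nabla\bs{w}\,dx.
\end{equation*}
Using the monotonicities in \eqref{prop_S} and \eqref{s-prop}, together with the elementary inequality $(|\bs{Du}|+|\bs{Dv}|)^{r-2}|\bs{Dw}|^2 \geq |\bs{Dw}|^r$ (valid for $r\geq 2$ since $|\bs{Dw}|\leq |\bs{Du}|+|\bs{Dv}|$), the left-hand side dominates $c\nu_1\|\bs{Dw}\|_2^2 + c\nu_2\|\bs{Dw}\|_r^r + c\alpha\|\bs{w}\|_{L^2(\partial\Omega)}^2$.

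For the convective term I expand $\bs{u}\otimes\bs{u} - \bs{v}\otimes\bs{v} = \bs{w}\otimes\bs{w} + \bs{w}\otimes\bs{v} + \bs{v}\otimes\bs{w}$; the first and third contributions vanish upon integration by parts thanks to $\div\bs{v}=\div\bs{w}=0$ and the tangency $\bs{v}{\cdot}\bs{n}=\bs{w}{\cdot}\bs{n}=0$, leaving only $\int_\Omega (\bs{w}\otimes\bs{v}):\nabla\bs{w}\,dx$. The key step is to bound this term by
\begin{equation*}
\tfrac{1}{2}c\nu_1\|\bs{Dw}\|_2^2 + C\,\|\bs{v}\|_{V_r}^{2r/(2r-3)}\|\bs{w}\|_H^2,
\end{equation*}
so that the $\|\bs{Dw}\|_2^2$ contribution absorbs into the left-hand side dissipation. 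This is achieved via H\"older's inequality with exponents $\bigl(6r/(5r-6),\ 3r/(3-r),\ 2\bigr)$, the Sobolev embedding $W^{1,r}(\Omega)\hookrightarrow L^{3r/(3-r)}(\Omega)$ (in 3D, with obvious modifications for $r\geq 3$) applied to $\bs{v}$, and the interpolation $\|\bs{w}\|_{L^{6r/(5r-6)}} \leq C\|\bs{w}\|_{L^2}^{(2r-3)/r}\|\bs{w}\|_{H^1}^{(3-r)/r}$, followed by Young's inequality with conjugate exponents $2r/3$ and $2r/(2r-3)$. The exponents are arranged precisely so that the $\bs{w}$-factor collapses to $\|\bs{w}\|_{L^2}^2 \leq \|\bs{w}\|_H^2$ and the $\bs{v}$-factor becomes $\|\bs{v}\|_{V_r}^{2r/(2r-3)}$.

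Combining these estimates leads to
\begin{equation*}
\tfrac{d}{dt}\|\bs{w}\|_H^2 \leq C\,\|\bs{v}\|_{V_r}^{2r/(2r-3)}\|\bs{w}\|_H^2,
\end{equation*}
and the hypothesis on $\bs{v}$ ensures the coefficient is in $L^1(0,T)$, so Gronwall's lemma together with $\bs{w}(0)=0$ forces $\bs{w}\equiv 0$. The main obstacle is the convective estimate, specifically the exponent bookkeeping: the scaling exponent $2r/(2r-3)$ is not free but is fixed by the simultaneous requirements that $\|\bs{Dw}\|_2^2$ absorbs cleanly and that the remaining $\bs{w}$-factor has Gronwall form; the admissibility of $\bs{w}$ as a test function, while requiring care, follows routinely from the energy equality provided in Theorem~\ref{thm-wp}.
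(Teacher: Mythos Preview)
Your proof is correct and follows essentially the same route as the paper: subtract the two weak formulations, test with $\bs{w}$, use the monotonicity of $\bs{S}$ and $\bs{s}$, estimate the convective term so that the $H^1$-part absorbs and the coefficient on $\|\bs{w}\|_H^2$ becomes $\|\bs{v}\|_{V_r}^{2r/(2r-3)}$, and finish with Gr\"onwall.

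The only notable difference is in the organization of the convective estimate. The paper first integrates by parts to reach $\int_\Omega |\bs{w}|^2|\nabla\bs{v}|\,dx$, then applies H\"older with exponents $(r,\tfrac{2r}{r-1})$ and the interpolation $\|\bs{w}\|_{2r/(r-1)} \le c\|\bs{w}\|_2^{(2r-3)/r}\|\bs{w}\|_{W^{1,2}}^{3/r}$; this works uniformly for all $r\ge 2$. Your splitting $(\tfrac{6r}{5r-6},\tfrac{3r}{3-r},2)$ together with the Sobolev embedding $W^{1,r}\hookrightarrow L^{3r/(3-r)}$ yields the same final inequality but is literally valid only for $r<3$ (the Sobolev exponent degenerates at $r=3$ and the first H\"older index drops below $2$ for $r>3$). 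Your ``obvious modifications for $r\ge 3$'' are indeed easy---e.g.\ integrate by parts first as the paper does---but should be stated, since the theorem is claimed for all $r\ge 11/5$. One further minor point: to absorb $\|\bs{w}\|_{H^1}^2$ into $\nu_1\|\bs{Dw}\|_2^2$ you need Korn's inequality, which introduces an extra $\|\bs{w}\|_{L^2(\partial\Omega)}^2$ (or $\|\bs{w}\|_2^2$) term on the right; this is harmless since it is controlled by $\|\bs{w}\|_H^2$, exactly as in the paper.
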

	\begin{proof}
		Test the equation for $\bs{w} := \bs{u} - \bs{v}$ by $\bs{w}$. 
		Using the identity
		\begin{equation}
		\int_{\Omega} \left( \bs{u} \otimes \bs{u} - \bs{v} \otimes \bs{v} \right):\nabla \bs{w} = \int_{\Omega} \left( \bs{u} \otimes \bs{w} - \bs{w} \otimes \bs{v} \right):\nabla \bs{w} = \int_{\Omega} \left( \bs{w} \cdot \nabla \bs{v} \right) \cdot \bs{w}
		\end{equation}
		(\tcb{in view of} $\ds \div \left( \bs{u} \otimes \bs{u} \right) = \left(\nabla \cdot \bs{u}\right) \bs{u} + \left(\bs{u} \cdot \nabla \right) \bs{u} $) as well as \eqref{prop_S}, one obtains
		\begin{equation}
		\frac{1}{2} \frac{d}{dt} \left( \norm{\bs{w}}^2_2 + \beta \norm{\bs{w}}^2_{L^2(\partial \Omega)} \right) + c_2 \int_{\Omega} I^2 (\bs{Du}, \bs{Dv}) \ dx + \alpha \int_{\partial \Omega} \left( \tcr{\bs{s}}(\bs{u}) - \tcr{\bs{s}}(\bs{v}) \right) \bs{w} \,\tcr{dS} \leq \int_{\Omega} \abs{\bs{w}}^2 \abs{\nabla \bs{v}}\tcr{dx}, 
		\end{equation}
		\nin where 
		\begin{equation}\label{def-IDD}
		I^2 (\bs{Du}, \bs{Dv}) := \left( \nu_1 + \nu_2 \left( \abs{\bs{Du}} + \abs{\bs{Dv}} \right)^{r-2} \right)\abs{\bs{Dw}}^2
		\end{equation}
		\nin By monotonicity we have 
		\begin{equation*}
		\alpha \int_{\partial \Omega} \left( \tcr{\bs{s}}(\bs{u}) - \tcr{\bs{s}}(\bs{v}) \right) \bs{w} \,\tcr{dS} \geq 0.
		\end{equation*}
		This yields
		\begin{equation}
		\frac{1}{2} \frac{d}{dt} \left( \norm{\bs{w}}^2_2 + \beta \norm{\bs{w}}^2_{L^2(\partial \Omega)} \right) + c_2 \int_{\Omega} I^2 (\bs{Du}, \bs{Dv}) \ dx \leq \int_{\Omega} \abs{\bs{w}}^2 \abs{\nabla \bs{v}} \tcr{dx} 
		\end{equation}
		\nin By Korn inequality (Lemma~\ref{lem-Korn} in the Appendix), we have
		\begin{equation}\label{idd-lb}
		\int_{\Omega} I^2 (\bs{Du}, \bs{Dv}) \ dx \geq \nu_1 \int_{\Omega} \abs{\bs{Dw}}^2\tcr{dx} + \nu_2 \int_{\Omega} \abs{\bs{Dw}}^r \tcr{dx} \geq c \nu_1 \left(\norm{\bs{w}}^2_{W^{1,2}(\Omega)} - \norm{\bs{w}}^2_{L^2(\partial \Omega)} \right).
		\end{equation}
		\nin We further estimate, using \eqref{int-inq}, cf.
		the Appendix, 
		\begin{align*}
		\int_{\Omega} \abs{\bs{w}}^2 \abs{\nabla \bs{v}}\tcr{dx} &\leq \norm{\nabla \bs{v}}_r \norm{\bs{w}}^2_{\frac{2r}{r-1}} \leq c_3 \norm{\nabla \bs{v}}_r \norm{\bs{w}}^{\frac{2r-3}{r}}_2 \norm{\bs{w}}^{\frac{3}{r}}_{W^{1,2}(\Omega)},\\			
		&\leq \frac{c_2}{4} \nu_1 \norm{\bs{w}}^2_{W^{1,2}(\Omega)} + c_4 \nu_1^{-\frac{3}{2r-3}} \norm{\nabla \bs{v}}^{\frac{3}{2r-3}}_r \norm{\bs{w}}_2^2.
		\end{align*}
		\nin Then with \eqref{idd-lb} we obtain,
		\begin{align}
		\frac{d}{dt} \norm{\bs{w}}^2_H + c_5 \nu_1 \norm{\bs{w}}^2_{W^{1,2}(\Omega)} + c_5 \int_{\Omega} &I^2 (\bs{Du}, \bs{Dv}) \ dx \nonumber\\
		&\leq c_4 \nu_1^{-\frac{3}{2r-3}} \norm{\bs{v}}^{\frac{3}{2r-3}}_{W^{1,r}(\Omega)} \norm{\bs{w}}_2^2 + c_6 \norm{\bs{w}}^2_{L^2(\partial \Omega)}, \nonumber\\
		& \leq c_4 \nu_1^{-\frac{3}{2r-3}} \norm{\bs{v}}^{\frac{3}{2r-3}}_{W^{1,r}(\Omega)} \norm{\bs{w}}_2^2 + c_6 \norm{\bs{w}}^2_{L^2(\partial \Omega)}, \nonumber\\ 
		& \leq c_7 \left(\nu_1^{-\frac{3}{2r-3}}\norm{\bs{v}}^{\frac{3}{2r-3}}_{W^{1,r}(\Omega)}  + 1 \right) \norm{\bs{w}}^2_H.  \label{de-est}
		%	\norm{\bs{w}(t)}^2_H &\leq \norm{\bs{w}(s)}^2_H	+ c_7 \int_{s}^{t} \left(\norm{\nabla \bs{v}}^{\frac{3}{2r-3}}_r  + 1 \right) \norm{\bs{w}(s)}^2_H \ ds	
		\end{align}
		\nin \tcb{Finally} we apply Gr\"{o}nwall's lemma to deduce
		\begin{equation}
		\norm{\bs{w}(t)}^2_2 \leq K \norm{\bs{w}(s)}^2_2, \quad 0 \leq s \leq t \leq T.
		\end{equation}
		In particular, we have uniqueness. 
	\end{proof}
	\tcb{Now}, we obtain additional time regularity of the solutions,
	together with an explicit estimate of the relevant norms,
	cf. \cite[Theorem 3.3]{BEKP:2009}.
	\tcb{Symbol $``\lesssim"$ means an inequality up to
	some generic (i.e., independent of the data) constant $c_i > 0$. }
	\begin{thm} \label{thm:regdt}
		Let $r > 12/5$, $\bs{f} \in \tcb{L^2(\Omega)}$. 
		Then the weak solution has additional time regularity
		\begin{align*}
		\bs{v} \in L^{\infty}(\tau, T; V_r), \\
		\partial_t \bs{v} \in L^2(\tau, T; L^2(\Omega)).
		\end{align*}
		Here $\tau \in (0,T)$ is arbitrary, and one can take $\tau = 0$ if $\bs{v}(0) \in V_r$.
	\end{thm}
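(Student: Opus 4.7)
My plan is to mirror the proof of Theorem~3.3 of \cite{BEKP:2009}, adapted to the present dynamic boundary setting. All manipulations below are rigorous at the Galerkin level (where $\bs{v}_n$ is time-smooth so that $\partial_t\bs{v}_n$ is a legitimate test function), and the resulting a priori bounds transfer to the limit solution $\bs{v}$ by weak-$\ast$ compactness and lower semicontinuity. The core step is to test the Galerkin version of \eqref{wk-form} with $\partial_t\bs{v}_n$. Using the potential structure \eqref{prop_Phi}, \eqref{pot-s} together with the observation that $\partial_t\bs{v}_n$ is tangential on $\partial\Omega$ (since $\bs{v}_n\cdot\bs{n}=0$ for all $t$), the pressure integral vanishes and the boundary stress term couples cleanly with \eqref{nse_L-d} to yield the identity
\begin{equation*}
\|\partial_t\bs{v}_n\|^2_H + \frac{d}{dt}\mathcal{E}(\bs{v}_n) = \ip{\bs{f}}{\partial_t\bs{v}_n} - \int_\Omega (\bs{v}_n\cdot\nabla)\bs{v}_n\cdot\partial_t\bs{v}_n \, dx,
\end{equation*}
where $\mathcal{E}(\bs{v}) := \int_\Omega\Phi(|\bs{Dv}|^2)\,dx + \alpha\int_{\partial\Omega}\mathcal{S}(\bs{v})\,dS$ is coercive of order $\nu_1\|\bs{Dv}\|_2^2 + \nu_2\|\bs{Dv}\|_r^r + \alpha\|\bs{v}\|_{L^q(\partial\Omega)}^q$.

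Next I estimate the right-hand side. The force term yields $|\ip{\bs{f}}{\partial_t\bs{v}_n}|\le\tfrac14\|\partial_t\bs{v}_n\|_2^2+\|\bs{f}\|_2^2$, while the convective term is handled via H\"older with exponents $L^{2r/(r-2)}\cdot L^r\cdot L^2$:
\begin{equation*}
\Bigl|\int_\Omega (\bs{v}_n\cdot\nabla)\bs{v}_n\cdot\partial_t\bs{v}_n\,dx\Bigr| \le \tfrac14\|\partial_t\bs{v}_n\|_2^2 + C\|\bs{v}_n\|_{L^{2r/(r-2)}}^2\|\nabla\bs{v}_n\|_{L^r}^2.
\end{equation*}
Here the condition $r>12/5$ is exactly what guarantees the strict Sobolev embedding $V_r\hookrightarrow L^{2r/(r-2)}$ in three dimensions, so Gagliardo--Nirenberg yields $\|\bs{v}_n\|_{L^{2r/(r-2)}}\le C\|\bs{v}_n\|_{L^2}^{1-\theta}\|\nabla\bs{v}_n\|_{L^r}^\theta$ with $\theta = 6/(5r-6)\in(0,1)$. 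Since $\|\bs{v}_n\|_{L^2}$ is bounded by the basic energy equality of Theorem~\ref{thm-wp}, combining the estimates gives
\begin{equation*}
\frac{d}{dt}\mathcal{E}(\bs{v}_n) + \tfrac12\|\partial_t\bs{v}_n\|^2_H \le C\|\bs{f}\|_2^2 + C\bigl(1+\|\nabla\bs{v}_n\|_{L^r}^{2+2\theta}\bigr).
\end{equation*}

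To close the argument, note that $\mathcal{E}(\bs{v}_n(\cdot))\in L^1(0,T)$ uniformly in $n$ (again from the basic energy equality), hence there is a dense set of $s\in(0,\tau)$ at which $\mathcal{E}(\bs{v}_n(s))$ is finite uniformly in $n$. Integrating the differential inequality from such $s$ to an arbitrary $t\in[\tau,T]$ and controlling the nonlinear term by a Young-type splitting of the form $\|\nabla\bs{v}_n\|_{L^r}^{2+2\theta}\le\|\nabla\bs{v}_n\|_{L^r}^r\cdot\mathcal{E}(\bs{v}_n)^{(2+2\theta-r)/r}$, a uniform Gronwall argument delivers $\sup_{t\in[\tau,T]}\mathcal{E}(\bs{v}_n(t))+\int_\tau^T\|\partial_t\bs{v}_n\|_H^2\,dt\le C(\tau)$ uniformly in $n$. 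When $\bs{v}_0\in V_r$ one may start from $s=0$. Passing to the limit yields $\bs{v}\in L^\infty(\tau,T;V_r)$ and $\partial_t\bs{v}\in L^2(\tau,T;L^2)$.

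The principal obstacle lies in this closure step. In the subrange $12/5 < r < 16/5$, one has $2+2\theta > r$, so $\|\nabla\bs{v}_n\|_{L^r}^{2+2\theta}$ fails to be in $L^1(0,T)$ a priori. The nonlinear Gronwall must then carefully balance this superlinear growth against the $L^1$-integrability of $\mathcal{E}(\bs{v}_n(\cdot))$ from the basic energy equality to prevent finite-time blow-up of the bound; this is precisely the mechanism that enforces the sharp threshold $r>12/5$ rather than a cruder one.
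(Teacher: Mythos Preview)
Your proposal is correct and follows essentially the same route as the paper: test with $\partial_t\bs{v}$, exploit the potential structure \eqref{prop_Phi}, \eqref{pot-s} to obtain $\frac{d}{dt}$ of an energy functional, estimate the convective term via H\"older with exponents $\tfrac{2r}{r-2},r,2$ and Gagliardo--Nirenberg, and close by a Bihari-type argument using the $L^1$-in-time bound on $\mathcal{E}\sim U$ from the basic energy equality. The only cosmetic differences are that the paper separates the cases $r\le 3$ and $r>3$ (with slightly different interpolation constants) and phrases the closure as an explicit division by $U^{1-\mu}$, whereas your ``Young-type splitting'' $\|\nabla\bs{v}\|_r^{2+2\theta}\le\|\nabla\bs{v}\|_r^r\,\mathcal{E}^{(2+2\theta-r)/r}$ with exponent $\gamma=(16-5r)/(5r-6)<1$ is the same trick in equivalent form.
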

	\nin Now let $\bs{\varphi} := \partial_t \bs{v}$
	\begin{multline*}
	\int_{\Omega} \abs{\partial_t \bs{v}}^2 \ dx + \beta \int_{\partial \Omega} \abs{\partial_t \bs{v}}^2 \ dS  + \int_{\Omega} \left[ \nu_1 \bs{D} \bs{v} + \nu_2 \abs{\bs{Dv}}^{r-2} \bs{D} \bs{v} - \bs{v} \otimes \bs{v} \right]: \nabla \partial_t \bs{v} \ dx \\ + \alpha \int_{\partial \Omega} \tcr{\bs{s}}(\bs{v}) \cdot \partial_t \bs{v} \ dS = \int_{\Omega} \bs{f} \cdot \partial_t \bs{v} \ dx\\
	\norm{\partial_t \bs{v}}^2_H  + \int_{\Omega} \left[ \nu_1 \bs{D} \bs{v} + \nu_2 \abs{\bs{Dv}}^{r-2} \bs{D} \bs{v} \right]: \partial_t \bs{Dv} \ dx + \int_{\Omega} (\bs{v} \cdot \nabla \bs{v}) \cdot \partial_t \bs{v} \,\tcr{dx} + \alpha \int_{\partial \Omega} \tcr{\bs{s}}(\bs{v}) \cdot \partial_t \bs{v} \ dS \\= \int_{\Omega} \bs{f} \cdot \partial_t \bs{v} \ dx
	\end{multline*}
	\nin We estimate, 
	\begin{equation}
	\int_{\Omega} (\bs{v} \cdot \nabla \bs{v}) \cdot \partial_t \bs{v} \ \tcr{dx} \leq \norm{\bs{v}}^2_{\frac{2r}{r-2}}\norm{\bs{v}}^2_{W^{1,r}(\Omega)} + \frac{1}{4}\norm{\partial_t\bs{v}}^2_2.
	\end{equation}
	\nin Now by \eqref{pot-s}, we obtain,
	\begin{equation*}
	\int_{\partial \Omega} \tcr{\bs{s}}(\bs{v}) \cdot \partial_t \bs{v} \ dS = \frac{d}{dt} \left(\int_{\partial \Omega} \bs{\calS} (\bs{v}) \ dS\right).
	\end{equation*}
	%\todo[inline]{we can use $L^1$-norm of $\bs{\calS}(\bs{v})$}
	%\nin Thus we impose a condition on $\bs{\calS}$ that it is the nonnegative primitive of $\tcr{\bs{s}}$. The Lipschitz condition implies a primitive $\bs{\calS}$ exists.\\ %But we additionally need monotonicity of $\bs{\calS}$. 
	
	\nin Then we obtain the following inequality,
	\begin{equation}
	\frac{1}{2} \norm{\partial_t \bs{v}}^2_H  + 
	\tcb{ \frac{d}{dt} \int_{\Omega} \Phi(\bs{Dv}) \ dx }
	%\int_{\Omega} \left[ \nu_1\bs{D} \bs{v} + \nu_2\abs{\bs{Dv}}^{r-2} \bs{D} \bs{v} \right]: \partial_t \bs{Dv} \ dx 
	+ \frac{d}{dt} \left(\int_{\partial \Omega} \bs{\calS} (\bs{v}) \ dS\right) \leq  c_8 \norm{\bs{v}}^2_{\frac{2r}{r-2}}\norm{\bs{v}}^2_{W^{1,r}(\Omega)} + \norm{\bs{f}}_2^2
	\end{equation}
	\nin \tcb{This can be more compactly written as}
	\begin{equation}\label{ine_dU}
	\frac{1}{2} \norm{\partial_t \bs{v}}^2_H  + \frac{d}{dt}U \\ \leq c_8 \norm{\bs{v}}^2_{\frac{2r}{r-2}}\norm{\bs{v}}^2_{W^{1,r}(\Omega)} + \norm{\bs{f}}_2^2,
	\end{equation}
	\nin where 
	%	\todo{changed}
	\begin{multline}
	U = U(t) := 1 + \int_{\Omega} \Phi(\bs{Dv}) \ dx + \int_{\partial \Omega} \bs{\calS} (\bs{v}) \ dS, \\ \text{and hence} \ U \sim 1 + \nu_1 \norm{\bs{Dv}}_2^2 + \nu_2 \norm{\bs{Dv}}_r^r + \norm{\bs{\calS} (\bs{v})}_{L^1(\partial \Omega)},
	\end{multline}
	\nin by \eqref{prop_Phi} and Korn's inequality \eqref{lem-Korn}. Now we distinguish two cases:
	\begin{enumerate}[(i)]
		\item case $r \in (12/5, 3]$. We claim
		\begin{equation}\label{bd-2r-3}
		\norm{\bs{v}}_{\frac{2r}{r-2}} \leq c_9 \norm{\bs{v}}^a_2 \norm{\bs{v}}^{1-a}_{W^{1,r}(\Omega)}, \quad a = \frac{5r - 12}{5r - 6}.
		\end{equation}
		\nin Note that $a > 0$ as $r > 12/5$. Then for $r \leq 3$, the embedding $\ds W^{1,r}(\Omega) \subset L^{\frac{3r}{3-r}}(\Omega)$ holds. We obtain
		\begin{equation}
		\norm{\bs{v}}_{\frac{2r}{r-2}} \leq \norm{\bs{v}}^a_2 \norm{\bs{v}}^{1-a}_{{\frac{3r}{3-r}}} \leq c_9 \norm{\bs{v}}^a_2 \norm{\bs{v}}^{1-a}_{W^{1,r}(\Omega)}.
		\end{equation}
		\nin Then we estimate the first term on the right hand side of \eqref{ine_dU} and obtain
		\begin{align*}
		\norm{\bs{v}}^{\frac{2(5r - 12)}{5r - 6}}_2 \norm{\bs{v}}^{\frac{10r}{5r - 6}}_{W^{1,r}(\Omega)} &\leq \nu_2^{-\frac{10}{5r-6}} \norm{\bs{v}}^{\frac{2(5r - 12)}{5r - 6}}_2 \left[ \nu_2 \norm{\bs{v}}^r_{W^{1,r}(\Omega)}\right]^{\frac{10}{5r - 6}},\\
		&\lesssim \nu_2^{-\frac{10}{5r-6}} \norm{\bs{v}}^{\frac{2(5r - 12)}{5r - 6}}_2 \left[ \nu_2\norm{\bs{Dv}}^r_r + \nu_2\norm{\bs{v}}^r_2 \right]^{\frac{10}{5r - 6}},\\
		&\lesssim \nu_2^{-\frac{10}{5r-6}} \norm{\bs{v}}^{\frac{2(5r - 12)}{5r - 6}}_2 U^{\frac{10}{5r - 6}} + \norm{\bs{v}}^4_2. 
		\end{align*}
		\nin This yields
		\begin{equation}
		\frac{d}{dt} U \leq c_{10} \nu_2^{-\frac{10}{5r-6}} \norm{\bs{v}}^{\frac{2(5r - 12)}{5r - 6}}_2  U^{\frac{10}{5r - 6}} + \norm{\bs{v}}^4_2 + \norm{\bs{f}}_2^2.
		%		&\leq c_{10} \nu_2^{-\frac{10}{5r-6}} \norm{\bs{v}}^{\frac{2(5r - 12)}{5r - 6}}_2  \left[U(t)\right]^{\mu}U(t) + \norm{\bs{v}}^4_2 + \norm{\bs{f}}_2^2,
		\end{equation}
		\nin Dividing by $U^{1-\mu}$, where $\mu = \frac{2(5r-12)}{5r-6}$ yields,
		\begin{equation}\label{U-2.4}
		\frac{d}{dt} U^{\mu} \leq c_{10} \nu_2^{-\frac{10}{5r-6}} \norm{\bs{v}}^{\frac{2(5r - 12)}{5r - 6}}_2  U + \norm{\bs{v}}^4_2 + \norm{\bs{f}}_2^2. 
		\end{equation}		
		\nin Then we apply \tcb{Gr\"{o}nwall's} lemma to obtain the necessary bounds on $U$. It is worthwhile to note that
		\begin{equation*}
		\left[ \norm{\bs{v}}^4_2 + \norm{\bs{f}}_2^2 \right]U^{\mu - 1} = \left[ \norm{\bs{v}}^4_2 + \norm{\bs{f}}_2^2 \right] U^{\frac{5r - 16}{5r - 6}} \leq \norm{\bs{v}}^4_2 + \norm{\bs{f}}_2^2.
		\end{equation*}
		The above property holds true because $U \geq 1$ and for $r \leq 3$, we have $\frac{5r - 16}{5r - 6} < 0$.
		\item case $r > 3$. 
		%	\begin{equation}
		%		\norm{\bs{v}}_{\frac{2r}{r-2}} \leq c_{11} \norm{\bs{v}}^{\frac{r-2}{r}}_2 \norm{\bs{v}}_{W^{1,2}(\Omega)}^{\frac{4(r-3)}{3r(r-2)}}\norm{\bs{v}}_{W^{1,r}(\Omega)}^{\frac{2}{3(r-2)}}
		%	\end{equation}
		\nin Since $\ds \frac{2r}{r-2} \in (2,6)$, we use the interpolation Lemma \ref{lem-int} to obtain,
		\begin{equation}
		\norm{\bs{v}}_{\frac{2r}{r-2}} \lesssim \norm{\bs{v}}_2^{\frac{r-3}{r}} \norm{\bs{v}}_6^{\frac{3}{r}}.
		\end{equation}
		\nin Again by Lemma \ref{lem-int-w} we obtain,
		\begin{equation}\label{bd-2r-2}
		\norm{\bs{v}}_{\frac{2r}{r-2}} \lesssim \norm{\bs{v}}_2^{\frac{r-2}{r}} \norm{\bs{v}}_{W^{1,3}(\Omega)}^{\frac{2}{r}} \leq c_{11} \norm{\bs{v}}_2^{\frac{r-2}{r}} \norm{\bs{v}}_{W^{1,r}(\Omega)}^{\frac{2}{r}}.
		\end{equation}
		\nin Then right hand side of \eqref{ine_dU} can be estimated as
		\begin{align}
		\norm{\bs{v}}_2^{\frac{2(r-2)}{r}} \norm{\bs{v}}_{W^{1,r}(\Omega)}^{\frac{2r + 4}{r}} &\leq \norm{\bs{v}}_2^{\frac{2(r-2)}{r}} \left[ \norm{\bs{Dv}}^r_r + \norm{\bs{v}}^r_2 \right]^{\frac{2r + 4}{r^2}},\nonumber \\
		&\leq \nu_2^{-\frac{r^2}{2r + 4}} \norm{\bs{v}}_2^{\frac{2(r-2)}{r}} U^{\frac{2r + 4}{r^2}} + \norm{\bs{v}}^4_2.
		\end{align}
		\nin This yields
		\begin{equation}\label{dUdt}
		\frac{d}{dt}U \leq c_{12} \nu_2^{-\frac{r^2}{2r + 4}} \norm{\bs{v}}_2^{\frac{2(r-2)}{r}} U^{\frac{2r + 4}{r^2}} + \norm{\bs{v}}^4_2 + \norm{\bs{f}}_2^2.
		\end{equation}
		\nin Take $\ds \mu = \frac{2r + 4}{r^2} - 1$. Then we consider two cases. \\
		
		\nin If $\ds \frac{2r + 4}{r^2} > 1$, i.e. $\mu > 0$, we divide \eqref{dUdt} by $U^{\mu}$. Thus we obtain
		\begin{equation*}
		\frac{d}{dt}U^{1 - \mu} \leq c_{12} \nu_2^{-\frac{r^2}{2r + 4}} \norm{\bs{v}}_2^{\frac{2(r-2)}{r}} U + \norm{\bs{v}}^4_2 + \norm{\bs{f}}_2^2.
		\end{equation*}
		\nin Similar to the previous case where $r \in (12/5,3]$, we observe that
		\begin{equation*}
		\left[ \norm{\bs{v}}^4_2 + \norm{\bs{f}}_2^2 \right] U^{\mu} \leq \norm{\bs{v}}^4_2 + \norm{\bs{f}}_2^2.
		\end{equation*}
		\nin If $\ds \frac{2r + 4}{r^2} \leq 1$,  i.e. $\mu \leq 0$, we obtain by \eqref{dUdt},
		\begin{equation}
		\frac{d}{dt}U \leq c_{12} \nu_2^{-\frac{r^2}{2r + 4}} \norm{\bs{v}}_2^{\frac{2(r-2)}{r}} U + \norm{\bs{v}}^4_2 + \norm{\bs{f}}_2^2.
		\end{equation}
		\nin Then in both cases, we invoke \tcb{Gr\"{o}nwall's} lemma to obtain bounds on $U$.
	\end{enumerate}
	
	\section{Dimension of the Attractor}
	We follow the general scheme of method of trajectories presented in \cite{MP:2002}. The main \tcb{modification} here is that we explicitly keep track of all a priori estimates.
\tcb{	
	\begin{lemma}
		There exists an absorbing, positively invariant set $\hat{\calB} \subset H$ such that 
		\begin{equation}
		B_0 := \sup_{\bs{v} \in \hat{\calB}} \norm{\bs{v}}_H \leq c_1 \min \left\{ \kappa_1^{-1}\norm{\bs{f}}_2, \left[\kappa_2^{-1}\norm{\bs{f}}_2\right]^{\frac{1}{s-1}} \right\},
		\end{equation}
		where $s = \min\{ r, q \}$, and $\kappa = \min\{ \nu_2, \alpha \}$.
	\end{lemma}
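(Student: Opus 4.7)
The plan is to start from the energy equality \eqref{ener-eq} and convert it into a dissipative differential inequality for $\norm{\bs{v}(t)}_H^2$, from which both bounds on $B_0$ will be read off. Applying the coercivity assumptions \eqref{prop_S} and \eqref{s-prop} to the energy equality gives
\begin{equation*}
\tfrac{1}{2}\tfrac{d}{dt}\norm{\bs{v}}_H^2 + c_2\nu_1\norm{\bs{Dv}}_2^2 + c_2\nu_2\norm{\bs{Dv}}_r^r + c_7\alpha\norm{\bs{v}}_{L^q(\partial\Omega)}^q \leq \ip{\bs{f}}{\bs{v}}_{V_r',V_r}.
\end{equation*}
The right-hand side is bounded by $\norm{\bs{f}}_2\,\norm{\bs{v}}_{V_r}$ (using that $\bs{f}\in L^2(\Omega)$ embeds into $V_r'$), and then Young's inequality absorbs a small fraction of each dissipative term on the left.

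The core step is to bound $\norm{\bs{v}}_H^2$ by the remaining dissipation. Since $(\bs{v},\tr\,\bs{v})\in V_r$ and $\bs{v}\cdot\bs{n}=0$ on $\partial\Omega$, Korn's inequality (Lemma~\ref{lem-Korn} in the Appendix, together with a Poincar\'e-trace argument) yields
\begin{equation*}
\norm{\bs{v}}_{L^2(\Omega)}^2 \lesssim \norm{\bs{Dv}}_2^2 + \norm{\bs{v}}_{L^2(\partial\Omega)}^2,
\qquad
\norm{\bs{v}}_{L^2(\partial\Omega)}^2 \lesssim \norm{\bs{v}}_{L^q(\partial\Omega)}^2,
\end{equation*}
so that $\norm{\bs{v}}_H^2$ is controlled by $\norm{\bs{Dv}}_2^2 + \norm{\bs{v}}_{L^q(\partial\Omega)}^2$. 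Splitting the dissipation into its \textit{linear} part ($\nu_1\norm{\bs{Dv}}_2^2$) and its \textit{nonlinear} part ($\nu_2\norm{\bs{Dv}}_r^r + \alpha\norm{\bs{v}}_{L^q(\partial\Omega)}^q$), one obtains two separate lower bounds: on the one hand $\nu_1\norm{\bs{v}}_H^2$, and on the other, after a Jensen-type interpolation to unify the exponents $r$ and $q$, a bound of the form $\kappa\norm{\bs{v}}_H^s$ with $s=\min\{r,q\}$, modulo an absolute additive constant coming from the Lebesgue embedding $L^q\hookrightarrow L^2$ on the bounded set $\partial\Omega$.

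Feeding either bound into the energy inequality produces one of two differential inequalities: a linear one $\tfrac{d}{dt}\norm{\bs{v}}_H^2 + c\nu_1\norm{\bs{v}}_H^2 \lesssim \nu_1^{-1}\norm{\bs{f}}_2^2$, and a nonlinear one $\tfrac{d}{dt}\norm{\bs{v}}_H^2 + c\kappa\norm{\bs{v}}_H^s \lesssim \kappa^{-1/(s-1)}\norm{\bs{f}}_2^{s/(s-1)}$. Standard Gronwall and nonlinear Gronwall arguments then give, respectively,
\begin{equation*}
\limsup_{t\to\infty}\norm{\bs{v}(t)}_H \lesssim \nu_1^{-1}\norm{\bs{f}}_2,
\qquad
\limsup_{t\to\infty}\norm{\bs{v}(t)}_H \lesssim (\kappa^{-1}\norm{\bs{f}}_2)^{1/(s-1)},
\end{equation*}
and taking the smaller of the two yields the claimed bound on $B_0$.

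Finally, to construct the positively invariant absorbing set $\hat{\calB}$, the plan is to set $\hat{\calB}$ to be the closure in $H$ of the forward orbit of the $H$-ball of radius $B_0+1$, truncated after a sufficiently large time $t_0$ guaranteed by the asymptotic bounds above; equivalently one may take the closed $H$-ball of radius $2B_0$ intersected with the forward-invariant hull, which by the energy inequality remains stable under the solution semiflow (or the set-valued semiflow, since weak solutions may a priori be nonunique on $H$). The main obstacle is the coercivity step that unifies the two dissipation scales $r$ and $q$ into a single $\norm{\bs{v}}_H^s$ bound with explicit dependence on $\kappa=\min\{\nu_2,\alpha\}$; the other steps are standard once that inequality is in place.
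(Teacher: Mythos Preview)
Your overall strategy matches the paper's: derive a dissipative energy inequality, extract two lower bounds on the dissipation (one quadratic, one of higher order), and from each read off an absorbing radius. The nonlinear branch is handled essentially as in the paper, including the restriction $\norm{\bs{v}}_H \ge 1$ which is what you call the ``absolute additive constant''.

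There is, however, a genuine gap in your linear branch. You split the dissipation so that the ``linear part'' is $\nu_1\norm{\bs{Dv}}_2^2$ alone and then claim this controls $\nu_1\norm{\bs{v}}_H^2$. That step fails in general: recall that $\norm{\bs{v}}_H^2 = \norm{\bs{v}}_2^2 + \beta\norm{\bs{v}}_{L^2(\partial\Omega)}^2$, and $\norm{\bs{Dv}}_2$ does not dominate $\norm{\bs{v}}_H$ for divergence-free fields with $\bs{v}\cdot\bs{n}=0$, because rigid rotations satisfy $\bs{Dv}=0$ while being tangent to $\partial\Omega$ on any axisymmetric domain. The Korn inequality you quote only gives $\norm{\bs{v}}_{W^{1,2}} \lesssim \norm{\bs{Dv}}_2 + \norm{\bs{v}}_{L^2(\partial\Omega)}$, so the boundary term is indispensable.

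The fix, which is exactly what the paper does, is to keep part of the boundary dissipation in the linear branch as well: the coercivity \eqref{s-prop} gives $\alpha\int_{\partial\Omega}\bs{s}(\bs{v})\cdot\bs{v}\,dS \ge \alpha c_6 \norm{\bs{v}}_{L^2(\partial\Omega)}^2$, and combining this with $\nu_1\norm{\bs{Dv}}_2^2$ via Korn yields $\min\{\nu_1,\alpha\}\,\norm{\bs{v}}_H^2$. This is why the stated bound involves $\kappa_1^{-1}$ with $\kappa_1=\min\{\nu_1,\alpha\}$, not $\nu_1^{-1}$ as in your inequality. Once you amend the splitting accordingly, your argument goes through and coincides with the paper's.
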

	\begin{proof}
		As in Theorem \ref{thm-wp}, we obtain
		\begin{equation}
		\frac{d}{dt} \norm{\bs{v}}^2_{H} + c_2 \left[\nu_1 \norm{\bs{Dv}}^2_2 + \nu_2 \norm{\bs{Dv}}^r_r + \alpha\norm{\bs{v}}^q_{L^q(\Gamma)} \right] \leq c_3 \norm{\bs{f}}_2\norm{\bs{v}}_H.
		\end{equation}
		\nin Then by dropping the term $\norm{\bs{Dv}}^r_r$, we compute by Korn's inequality in Lemma \ref{lem-Korn},
		\begin{equation}
		\frac{d}{dt} \norm{\bs{v}}^2_{H} + c_2 \kappa_1 \norm{\bs{v}}^2_H \leq c_3 \norm{\bs{f}}_2\norm{\bs{v}}_H, \ \text{where } \kappa_1 = \min\{\nu_1, \alpha \} \label{ine_wl}
		\end{equation}
		\nin Thus $\ds \frac{d}{dt} \norm{\bs{v}}^2_{H} \le -\gamma \norm{\bs{v}}^2_H$ if $\norm{\bs{v}}_H > c_4 \kappa_1^{-1} \norm{\bs{f}}_2$ for some $\gamma  > 0$. Now we  drop the term $\norm{\bs{Dv}}^2_2$ and obtain,
		\begin{equation}
		\frac{d}{dt} \norm{\bs{v}}^2_{H} + c_2 \left[\nu_2 \norm{ \bs{Dv}}^r_r + \alpha \norm{\bs{v}}^q_{L^q(\Gamma)} \right] \leq c_3 \norm{\bs{f}}_2\norm{\bs{v}}_H. 
		\end{equation}
		Then we use the following estimate for $\norm{\bs{v}}_H \geq 1$,
		\begin{equation*}
			\nu_2 \norm{\bs{Dv}}^r_r + \alpha \norm{\bs{v}}^q_{L^q(\Gamma)} \geq c_5 \kappa \norm{\bs{v}}^s_H, \text{ where } s = \min\{ r, q \}, \text{ and } \kappa_2 = \min\{ \nu_2, \alpha \}.
		\end{equation*}
		Thus we obtain $\ds \frac{d}{dt} \norm{\bs{v}}^2_{H} \le -\gamma \norm{\bs{v}}_H^2$ if $\ds \norm{\bs{v}}_H > c_6  \left[\kappa^{-1}\norm{\bs{f}}_2\right]^{\frac{1}{s-1}}$ for some $\gamma  > 0$. Hence the conclusion follows.
	\end{proof}
}

	\begin{lemma}\label{lem-Br}
		There exists an absorbing, positively invariant $\calB \subset \hat{\calB}$ such that $\calB$ is closed in $H$, and 
		\begin{equation}\label{B_r}
		B_r := \sup_{\bs{v} \in \hat{\calB}} \norm{\bs{v}}_{W^{1,r}} \leq \begin{cases}
		c_{12} B^{\frac{5(5r-6)}{2(5r-11)}}_0, \ r \in (12/5,3],\\
		c_{12} B^5_0, \ r > 3.
		\end{cases}
		\end{equation}
	\end{lemma}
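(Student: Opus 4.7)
The plan is to import the a priori estimates of Theorem~\ref{thm:regdt} into the context of the absorbing set $\hat{\calB}$, carefully tracking the dependence on $B_0$, and then take an appropriate closure in $H$ to produce $\calB$.

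First, fix any weak solution $\bs{v}$ with $\bs{v}(0) \in \hat{\calB}$. Positive invariance gives $\norm{\bs{v}(t)}_H \leq B_0$ for all $t \geq 0$. Integrating the energy equality \eqref{ener-eq} over $[t,t+1]$ and using \eqref{prop_S}, \eqref{s-prop} together with Korn's inequality, I would obtain the $L^1_t$ control
\begin{equation*}
\int_t^{t+1} U(s)\, ds \lesssim 1 + B_0^2 + \norm{\bs{f}}_{V_r'}^{r'},
\end{equation*}
so that some $s^{*}\in[t,t+1]$ satisfies $U(s^{*}) \lesssim 1+B_0^2$. Next I would invoke the differential inequalities of Theorem~\ref{thm:regdt}, substituting $\norm{\bs{v}(s)}_2 \leq B_0$ to expose the $B_0$-dependence of the coefficients: \eqref{U-2.4} for $r\in(12/5,3]$, and \eqref{dUdt} for $r>3$ (with the further split according to whether the exponent $(2r+4)/r^2$ exceeds $1$, exactly as in the proof of Theorem~\ref{thm:regdt}). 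In each sub-case the relevant inequality has the schematic form
\begin{equation*}
\frac{d}{dt}U^{\mu} \le A\, U + B,
\end{equation*}
with $\mu\in(0,1]$ and $A,B$ explicit polynomials in $B_0$ and the data. A single integration on $[s^{*},t+1]$, using $U(s^{*})\lesssim B_0^2$ together with the integral control $\int_{s^{*}}^{t+1}U\lesssim B_0^2$ already established, produces a pointwise estimate $U(t+1) \leq F(B_0)$ with $F$ an explicit polynomial. Back-substitution via $U\gtrsim \nu_2\norm{\bs{Dv}}_r^r$ (from \eqref{prop_Phi}) and Korn's inequality then converts this into the $W^{1,r}$-bound, with the two exponents of \eqref{B_r} arising from the two sub-cases.

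To construct $\calB$ itself I would then set
\begin{equation*}
\calB := \overline{\bigcup_{t\ge 1} S(t)\hat{\calB}}^{\,H},
\end{equation*}
where $S(t)$ denotes the (a priori set-valued) solution operator. Positive invariance and absorption are inherited directly from $\hat{\calB}$. Since the union on the right is bounded in $V_r$ by the previous step, and bounded subsets of the reflexive space $V_r$ are weakly sequentially compact, any $H$-limit of a sequence from this union lies in $V_r$ with the same $V_r$-bound, so $\calB$ indeed satisfies \eqref{B_r}.

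The main obstacle is extracting a \emph{polynomial} (rather than exponential) dependence of the $V_r$-bound on $B_0$. A naive Gr\"onwall applied to $U' \le A\,U + B$ would introduce factors of the form $\exp(c B_0^{\text{positive power}})$, destroying the statement. The device of choosing $s^{*}$ in the density set $\{U\lesssim \int_t^{t+1}U\}$ and then integrating only once converts the multiplicative term $A\cdot U$ into the additive contribution $A\cdot\int_{s^{*}}^{t+1}U$, at the cost of only one extra factor $B_0^2$. Carrying this out cleanly in each sub-case of Theorem~\ref{thm:regdt}, and correctly inverting the nonlinear exponent $\mu$ when passing from $U^{\mu}$ back to $U$ and then to $\norm{\bs{v}}_{W^{1,r}}$, is what is needed to produce the specific exponents displayed in \eqref{B_r}.
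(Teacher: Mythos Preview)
Your proposal is correct and follows essentially the same approach as the paper: bound $\int U$ via the energy inequality, select a time where $U$ is pointwise controlled, and then integrate the differential inequality from Theorem~\ref{thm:regdt} \emph{once} (rather than applying Gr\"onwall) so that the multiplicative term $A\cdot U$ becomes the additive contribution $A\cdot\int U$ and no exponential in $B_0$ appears. The paper's only notable implementation differences are that it takes the interval $[0,2T]$ with the specific choice $T=B_0$ (so the mean-value step yields $U(\tau)\lesssim B_0$ rather than your $B_0^2$), and it defines $\calB$ directly as the set of time-$2T$ values $\{\bs v(2T):\bs v(0)\in\hat{\calB}\}$, with closedness coming from compactness of the set of weak solutions rather than your weak-$V_r$ argument on the $H$-closure.
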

	\begin{proof}
		Set 
		\begin{equation*}
		\calB := \left\{ \bs{v}(2T); \ \bs{v} \text{ is a weak solution on } [0,2T], \text{and } \bs{v}(0) \in \hat{\calB}  \right\},
		\end{equation*}
		\nin and we take $T = B_0$. Recalling \eqref{ine_wl} and taking $U = \nu_1 \norm{\bs{Dv}}^2_2 + \nu_2 \norm{\bs{Dv}}^r_r$ %$\wti U = \nu_1 \norm{\bs{v}}^2_{W^{1,2}} + \nu_2 \norm{\bs{v}}^r_{W^{1,r}}$
		%	\begin{align}
		%		\int_{0}^{2T} \wti U(t) \ dt &\lesssim \int_{0}^{2T} \left[\norm{\bs{f}}_2\norm{\bs{v}}_H + \frac{d}{dt} \norm{\bs{v}}^2_{H} + \norm{\bs{v}}^2_2 + \norm{\bs{v}}^r_2 \right] \ dt,\\
		%		&\lesssim TB_0^r + TB_0^2 + B_0^2 + TB_0 \leq c_1 B_0^{r+1}, \quad (\because B_0 > 1),
		%	\end{align}
		\begin{align}
		\int_{0}^{\tcr{T}} U(t) \ dt &\lesssim \int_{0}^{\tcr{T}} \left[\norm{\bs{f}}_2\norm{\bs{v}}_H + \frac{d}{dt} \norm{\bs{v}}^2_{H} \right] \ dt, \nonumber\\
		&\lesssim B_0^2 + TB_0 \leq 2c_1 B_0^2.
		\end{align}
		\nin By the mean value theorem of integrals, we obtain for $\tau \in (0,T)$ such that
		\begin{equation}
		U(\tau) \leq c_2 B_0.
		\end{equation} 
		\nin Assume $r \leq 3$. Integrating \eqref{U-2.4} over $(\tau, 2T)$ yields
		\begin{align*}
		U^{\mu}(2T) &\leq c_3 \nu_2^{-\frac{10}{5r-6}} B_0^{\frac{2(5r - 12)}{5r - 6}} \int_{\tau}^{2T} U(t) \ dt + \int_{\tau}^{2T} \left[ \norm{\bs{v}}^4_2 + \norm{\bs{f}}_2^2\right] \ dt + U^{\mu}(\tau),\\
		&\leq c_4 \nu_2^{-\frac{10}{5r-6}} B_0^{\frac{4(5r - 9)}{5r - 6}} + c_5B_0^{\mu} +  c_6B_0^5 + c_7B_0\norm{\bs{f}}_2^2.
		\end{align*}
		\nin Here $\mu = \frac{2(5r - 11)}{5r - 6}$. It is reasonable to assume that $B_0 > 1$, and $\nu_1, \nu_2 < 1$, hence the largest term is $B_0^{5/\mu}$-term. The above estimate only gives an upper bound for $\norm{\bs{Dv}}_r$. But by adding $\norm{\bs{v}}_2$ to both sides we obtain an upper bound for $\norm{\bs{v}}_{W^{1,r}}$. Then the desired estimate for $B_r$ holds.\\  %Recalling that $\ds \norm{\bs{v}(2T)}_{W^{1,r}} \leq c_7 \left( \nu_2^{-1} \right)$
		
		\nin Then we compute for $r > 3$. Integrating 
		\begin{align*}
		U(2T) &\leq c_7 \nu_2^{-\frac{r^2}{2r + 4}} B_0^{\frac{2(r-2)}{r}} \int_{\tau}^{2T} U(t) \ dt + \int_{\tau}^{2T}\left[ \norm{\bs{v}}^4_2 + \norm{\bs{f}}_2^2\right]dt + U(\tau),\\
		&\leq c_8 \nu_2^{-\frac{r^2}{2r + 4}} B_0^{\frac{4(r-1)}{r}} + c_9 B_0^5 + c_{10}B_0\norm{\bs{f}}_2^2 + c_{11}B_0.
		\end{align*}
		\nin The largest term is the $B_0^5$-term. Hence the estimate follows. The closedness of $\calB$ follows from the compactness of the set of weak solutions, which is part of the existence theory. See the reference for Theorem~\ref{thm-wp}. \end{proof}
	
	%%%%%%%%%%%%%%%%%%%%%%%%%%%%%%%%%%%%%%%%%%%%%%%%%%%%%%
	%%%% attractors, method of trajectories, ...
	%%%%%%%%%%%%%%%%%%%%%%%%%%%%%%%%%%%%%%%%%%%%%%%%%%%%%%
	\subsection{Attractors and Method of Trajectories.}
	Observe that by Theorems~\ref{thm:uniq}, \ref{thm:regdt},
	the solution operator $S(t):\bs{v}_0 \to \bs{v}(t)$ is
	well-defined for $\bs{v}_0 \in \calB$. It follows that 
	\begin{equation}
	\calA = \omega(\calB) = \bigcap_{\tau \ge 0 } \bigcup_{t\ge \tau}
	\overline{ S(t)\calB }^{H}
	\end{equation}
	is the so-called \emph{global attractor}. Our ultimate goal is
	to estimate its fractal dimension, defined as
	\begin{equation}
	d^H_f(\calA) = \limsup_{\varepsilon \to 0+}
	\frac{ \ln N_H(\calA,\varepsilon) }{ - \ln \varepsilon }
	\end{equation}
	where $N_H(\calA,\varepsilon)$ is the smallest number of
	$\varepsilon$-balls in the space $H$ that cover $\calA$. We employ the
	method of trajectories. Since the argument is very similar to
	\cite{BEKP:2009}, we only briefly sketch the main points. 
	We refer to \cite{MP:2002} for a more
	detailed description of the method; see also the introduction
	for other related references.
	\par
	Let $\ell>0$ be fixed; the exact value will specified in \eqref{val-l} below.
	The space of trajectories is defined as
	\begin{equation}	\label{def-bl}
	\Bell 
	= \left\{ \chi \in H_\ell;
	\textrm{ $\chi$ is a weak solution on $[0,\ell]$, $\chi(0)
		\in \calB$} \right\} \,,
	\end{equation}
	with the underlying metric of $H_\ell = L^2(0,\ell;H)$. Note
	however that any trajectory $\chi$ has additional regularity, cf.\
	Theorem ~\ref{thm-wp}.  In particular, we always work with the
	representative $\chi \in C([0,\ell];H)$, so that the value
	$\chi(t)$ is well-defined for any $t\in[0,\ell]$.
	The operators $\calL:\Bell \to \Bell$, $b:\calB \to \Bell$ and
	$e:\Bell \to \calB$ are defined via the conditions
	\begin{align*}
	\calL(\chi) &= \psi \iff \chi(\ell) = \psi(0)  \,, \\
	e(\chi) &= \chi(\ell) \,, \\
	b(\bs{v}_0) &= \chi \iff \chi(0) = \bs{v}_0 \,.
	\end{align*}
	Observe that $S(\ell) = e \circ b$ and $b \circ e =
	\calL$, hence $\calL$ is an equivalent (discrete) description
	of the dynamics of $S(t)$ on $\Bell = b(\calB)$. In particular,
	one has $\All = b(\calA)$, $\calA = e(\All)$, where $\All$ is
	the global attractor for the dynamical system
	$(\calL^n,\Bell)$.
	\par
	In view of the Lipschitz continuity of operators $e$, $b$
	(see for example \cite[Lemma 2.1]{MP:2002}, \cite[Lemma 1.2]{MP:2002})
	\begin{equation}
	d_f^H(\calA) = d_f^{L^2(0,\ell;H)}(\All)
	\end{equation}
	Thus, it suffices to estimate the last quantity. This will be
	done using the so-called smoothing property, see \cite[Lemma
	1.3]{MP:2002}; see also \cite[Theorem 4.1]{BEKP:2009}.
	It remains to explicitly estimate the appropriate Lipschitz
	constants, which is done in the following lemma. Finally, the
	asymptotics of covering numbers is investigated in the
	Appendix.
	
	%%%%%%%%%
	\begin{lemma}\label{lem-L}
		Set 
		\begin{equation}\label{val-l}
		\ell := \left[\nu_1^{-\frac{3}{2r-3}}B^{\frac{2r}{2r-3}}_r +  1\right]^{-1}.
		\end{equation}
		\nin Then for all $\chi, \psi \in \calA_{\ell}$
		\begin{align}
		\norm{\calL \chi - \calL \psi}_{L^2(0,\ell;V_2)} \leq L_1 \norm{\chi - \psi}_{H_{\ell}}, \label{lip-L}\\
		\norm{\partial_t \calL \chi - \partial_t \calL \psi}_{L^2(0,\ell;V'_r)} \leq L_2 \norm{\chi - \psi}_{H_{\ell}}\label{lip-Lt}, 
		\end{align}
		\nin where
		\begin{align}
		L_1 &= c_1 \tcr{\nu_1}^{-\frac{1}{2}}\ell^{-\frac{1}{2}}, \label{L_1}\\
		L_2 &= U + W \tcr{+ Q}, \label{L_2}\\
		U &= c_2 \nu_1 L_1(1 + M_r), \label{U}\\
		M_r &= \nu_1^{-\frac{1}{2}} \nu_2^{\frac{1}{2}}B^{\frac{r-2}{2}}_r, \label{M_r}\\
		W &= \begin{cases}\label{W}
		c_4B_0^{\frac{5r-12}{5r-6}}B_r^{\frac{6}{5r-6}}, \ r \in (12/5, 3],\\
		c_4B_0^{\frac{r-2}{r}}B_r^{\frac{2}{r}}, \ r  > 3.
		\end{cases}
		\end{align}
	\end{lemma}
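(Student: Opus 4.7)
The plan is to set $\bs{w} := \calL\chi - \calL\psi = \bs{u}-\bs{v}$, where $\bs{u},\bs{v}$ are weak solutions on $[0,\ell]$ whose initial data $\chi(\ell), \psi(\ell)$ lie in $\calB$. Positive invariance of $\calB$ together with Lemma~\ref{lem-Br} guarantees that $\bs{u}, \bs{v}$ remain in the $W^{1,r}$-ball of radius $B_r$ throughout $[0,\ell]$. Applying the computation from Theorem~\ref{thm:uniq} to $\bs{w}$ yields
\begin{equation*}
\frac{d}{dt}\norm{\bs{w}}^2_H + c\,\nu_1 \norm{\bs{w}}^2_{V_2} + c\int_\Omega I^2(\bs{Du},\bs{Dv})\,dx \leq c'\bigl(1 + \nu_1^{-3/(2r-3)} B_r^{3/(2r-3)}\bigr)\, \norm{\bs{w}}^2_H.
\end{equation*}
With $\ell$ chosen as in \eqref{val-l}, the Gr\"onwall exponent over $[0,\ell]$ is $O(1)$, so $\sup_{t\in[0,\ell]} \norm{\bs{w}(t)}^2_H \lesssim \norm{\bs{w}(0)}^2_H$.

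For \eqref{lip-L}, I would next apply the standard trajectory trick: the same monotonicity estimate applied to $\chi-\psi$ on $[s,\ell]$ yields $\norm{\chi(\ell)-\psi(\ell)}^2_H \lesssim \norm{\chi(s)-\psi(s)}^2_H$ for each $s \in [0,\ell]$, and integrating in $s$ gives $\norm{\bs{w}(0)}^2_H \lesssim \ell^{-1}\norm{\chi-\psi}^2_{H_\ell}$. Integrating the differential inequality on $[0,\ell]$ then yields $\nu_1\int_0^\ell \norm{\bs{w}}^2_{V_2}\,dt \lesssim \norm{\bs{w}(0)}^2_H$, which is \eqref{lip-L} with $L_1 = c_1\nu_1^{-1/2}\ell^{-1/2}$. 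As a by-product one records $\int_0^\ell \!\int_\Omega I^2\,dx\,dt \lesssim \ell^{-1}\norm{\chi-\psi}^2_{H_\ell}$, which will be needed next.

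For \eqref{lip-Lt}, test the equation for $\bs{w}$ against $\bs{\varphi}\in V_r$ and estimate each of the three nonlinear contributions to $\ip{\partial_t\bs{w}}{\bs{\varphi}}_{V_r',V_r}$. For the stress term, a Cauchy--Schwarz step with the weight $\nu_1+\nu_2(|\bs{Du}|+|\bs{Dv}|)^{r-2}$ combined with H\"older (exponents $r/(r-2), r/2$) and the bound $\norm{\bs{Du}}_r, \norm{\bs{Dv}}_r \leq B_r$ produces $\bigl(\nu_1+\nu_2 B_r^{r-2}\bigr)^{1/2}\bigl(\int I^2\,dx\bigr)^{1/2}\norm{\bs{\varphi}}_{V_r}$; squaring, integrating in $t$, and inserting the $\int I^2$ bound above yields the $U$ contribution of \eqref{U} with $M_r$ as in \eqref{M_r}. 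For the convective term, write $\bs{u}\otimes\bs{u}-\bs{v}\otimes\bs{v}=\bs{u}\otimes\bs{w}+\bs{w}\otimes\bs{v}$ and apply H\"older with exponents $(2r/(r-2),2,r)$ to get $(\norm{\bs{u}}_{2r/(r-2)}+\norm{\bs{v}}_{2r/(r-2)})\norm{\bs{w}}_2\norm{\bs{\varphi}}_{V_r}$; the case-dependent interpolations \eqref{bd-2r-3} and \eqref{bd-2r-2} between $L^2$ and $W^{1,r}$ then give the $L^{2r/(r-2)}$ bound encoded in \eqref{W}, and $\norm{\bs{w}}_{L^2(0,\ell;H)} \lesssim \norm{\chi-\psi}_{H_\ell}$ (from the sup bound above) yields the convective contribution $W\norm{\chi-\psi}_{H_\ell}$. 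The boundary nonlinearity is handled by the Lipschitz property in \eqref{s-prop} together with the trace embedding $V_r \hookrightarrow L^2(\partial\Omega)$, giving the $Q$ contribution proportional to $\alpha$.

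The main obstacle is lining up the interpolation exponents so that the convective term exactly matches the two-case formula \eqref{W}, and verifying that the Gr\"onwall factor stays uniformly bounded with the specific choice of $\ell$ in \eqref{val-l}: both reduce to the critical scaling $3/(2r-3)$, which is precisely the algebraic condition giving the threshold $r > 12/5$. The remaining work is careful bookkeeping of constants through H\"older, Korn, and Gr\"onwall.
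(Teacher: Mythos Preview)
Your proposal is correct and follows essentially the same route as the paper: there one works on the doubled interval $[0,2\ell]$ and obtains \eqref{lip-L} by integrating the Gr\"onwall-type inequality first over $t\in(s,2\ell)$ and then over $s\in(0,\ell)$, which is equivalent to your two-step reduction via $\norm{\bs{w}(0)}_H$, and the stress, convective, and boundary contributions to $\partial_t\bs{w}$ are handled exactly as you outline (weighted H\"older against $I(\bs{Du},\bs{Dv})$, the interpolations \eqref{bd-2r-3}/\eqref{bd-2r-2}, and the Lipschitz bound in \eqref{s-prop}, respectively). Note only a minor slip in your first display: the exponent on $B_r$ should be $2r/(2r-3)$ so as to match \eqref{val-l}, not $3/(2r-3)$.
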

	\begin{proof}
		Let $\bs{u}, \bs{v}$ be two weak solutions on $[0,2\ell]$ such that $\ds \bs{u}|_{[0,\ell]} = \chi, \bs{u}|_{[0,\ell]} = \psi$, and set $\bs{w} := \bs{u} - \bs{v}$. In view of \eqref{val-l}, \eqref{de-est} is rewritten as 
		\begin{equation*}
		\frac{d}{dt} \norm{\bs{w}}^2_H + c_5 \nu_1 \norm{\bs{w}}^2_{W^{1,2}(\Omega)} + c_5 \int_{\Omega} I^2 (\bs{Du}, \bs{Dv}) \ dx \leq c_7 \left[\nu_1^{-\frac{3}{2r-3}}B^{\frac{2r}{2r-3}}_r + 1\right] \norm{\bs{w}}_H^2.
		\end{equation*}
		\nin We replace norm of the second term of the left hand side with the equivalent norm $\norm{ \ }_{V_2}$.% Therefore, we add $\ds c_5\nu_1 \norm{\bs{w}}^2_{L^2(\partial \Omega)}$ to both sides.
		This yields,
		\begin{equation*}
		\frac{d}{dt} \norm{\bs{w}}^2_H + c_8 \nu_1 \norm{\bs{w}}^2_{V_2} + c_8 \int_{\Omega} I^2 (\bs{Du}, \bs{Dv}) \ dx \leq c_7 \left[\nu_1^{-\frac{3}{2r-3}}B^{\frac{2r}{2r-3}}_r + 1\right] \norm{\bs{w}}_H^2.
		\end{equation*}
		\nin Then by \eqref{val-l} we obtain,
		\begin{equation}\label{de-est-1}
		\frac{d}{dt} \norm{\bs{w}}^2_H + c_8 \nu_1 \norm{\bs{w}}^2_{V_2} + c_8 \int_{\Omega} I^2 (\bs{Du}, \bs{Dv}) \ dx \leq c_7 \ell^{-1} \norm{\bs{w}}_H^2.
		\end{equation}
		Neglecting the positive terms of the \tcb{left hand side}, we obtain from \tcb{Gr\"{o}nwall's} Lemma
		\begin{equation}\label{ine-wts}
		\norm{\bs{w}(t)}^2_H \leq c_9 \norm{\bs{w}(s)}^2_H, \quad 0 < s < t < 2\ell,
		\end{equation}
		\nin where $c_9 = \text{exp}\left((t-s)\ell^{-1}\right) \leq \text{exp}(2c_7)$. In other words, the smallness of $\ell$ eliminates the (exponential) dependence of the Lipschitz constant of $S(t)$ on the viscosities.\\
		
		\nin Integrating \eqref{de-est-1} over $(s,2\ell)$, where $s \in (0,\ell)$ is fixed, one further derives
		\begin{equation*}
		c_8 \nu_1 \int_{s}^{2\ell} \norm{\bs{w} \tcr{(t)}}^2_{V_2} \tcr{dt} + c_8 \int_{s}^{2\ell} \int_{\Omega} I^2 (\bs{Du}, \bs{Dv}) \ dx\tcr{dt} \leq \norm{\bs{w}(s)}_H^2 + c_7 \ell^{-1} \int_{s}^{2\ell} \norm{\bs{w}(\tcr{t})}_H^2 \tcr{dt}.
		\end{equation*}
		\nin By \eqref{ine-wts}, we obtain $\ds \int_{s}^{2\ell} \norm{\bs{w}(t)}^2_H \ dt \leq 2 c_9 \ell \norm{\bs{w}(s)}_H^2$. By substituting this back in the above inequality, we obtain,
		\begin{equation*}
		c_8 \nu_1 \int_{s}^{2\ell} \norm{\bs{w}\tcr{(t)}}^2_{V_2}\tcr{dt} + c_8 \int_{s}^{2\ell} \int_{\Omega} I^2 (\bs{Du}, \bs{Dv}) \ dx\tcr{dt} \leq c_{10} \norm{\bs{w}(s)}_H^2.
		\end{equation*}
		\nin Integrating over $s \in (0,\ell)$ yields,
		\begin{equation*}
		\ell \nu_1 \int_{s}^{2\ell} \norm{\bs{w}\tcr{(t)}}^2_{V_2}\tcr{dt} + \ell \int_{s}^{2\ell} \int_{\Omega} I^2 (\bs{Du}, \bs{Dv}) \ dx\tcr{dt} \leq c_{11} \int_{0}^{\ell} \norm{\bs{w}(s)}_H^2\tcr{ds}.
		\end{equation*}
		\nin This proves \eqref{lip-L}. We also note here that
		\begin{equation}\label{est_IDD}
		\left[\int_{s}^{2\ell} \int_{\Omega} I^2 (\bs{Du}, \bs{Dv}) \ dx\tcr{dt}\right]^{\rfrac{1}{2}} \leq c_{12} \nu_1^{\frac{1}{2}}L_1 \left[ \int_{0}^{\ell} \norm{\bs{w}(s)}_H^2 \tcr{ds}\right]^{\rfrac{1}{2}}.
		\end{equation}
		To prove \eqref{lip-Lt}, \eqref{nse_L} is used to get
		\begin{align*}
		\norm{\partial_t \bs{w}}_{L^2(\ell, 2 \ell; V'_r)} &= \sup_{\varphi} \int_{\ell}^{2\ell} \ip{\partial_t \bs{w}}{\varphi}\tcr{dt},\\
		&= \sup_{\varphi} \bigg[ \underbrace{\int_{\ell}^{2\ell} \int_{\Omega} \left( \bs{S}(\bs{Du}) - \bs{S}(\bs{Dv}) \right):\bs{D}\varphi \,\tcr{dx}\tcr{dt}}_{I_1} + \underbrace{\int_{\ell}^{2\ell} \int_{\Omega} (\bs{u} \otimes \bs{u} - \bs{v} \otimes \bs{v}):\nabla \varphi \,\tcr{dx}\tcr{dt}}_{I_2} \\ &\hspace{4cm}  \tcr{ +\alpha \underbrace{\int_{\ell}^{2\ell} \int_{\partial \Omega} \left( \bs{s}(\bs{u}) - \bs{s}(\bs{v}) \right) \bs{w}:\varphi \,dSdt}_{I_3} }\bigg]
		\end{align*}
		\nin where the supremum is taken over $\varphi \in L^2(\ell, 2\ell; V_r)$ with $\norm{\varphi} = 1$. By H\"{o}lder inequality
		\begin{equation*}
		I_1 \leq \left[\int_{\ell}^{2\ell} \norm{\bs{S}(\bs{Du}) - \bs{S}(\bs{Dv})}^2_{r'} \tcr{dt} \right]^{\rfrac{1}{2}}.
		\end{equation*}
		Then by \eqref{prop_S}, \eqref{def-IDD} we obtain
		\begin{equation*}
		\abs{\bs{S}(\bs{Du}) - \bs{S}(\bs{Dv})} \leq I(\bs{Du}, \bs{Dv})\left( \nu_1 + \nu_2 \left( \abs{\bs{Du}} + \abs{\bs{Du}} \right)^{r-2} \right)^{\rfrac{1}{2}},
		\end{equation*}
		\nin hence
		\begin{align*}
		\norm{\bs{S}(\bs{Du}) - \bs{S}(\bs{Dv})}_{r'} &= \left[ \int_{\Omega} \abs{\bs{S}(\bs{Du}) - \bs{S}(\bs{Dv})}^{r'} \tcr{dx} \right]^{\rfrac{1}{r'}},\\
		&\leq \left[ \int_{\Omega} I^{r'}(\bs{Du}, \bs{Dv})\left( \nu_1 + \nu_2 \left( \abs{\bs{Du}} + \abs{\bs{Du}} \right)^{r-2} \right)^{\rfrac{r'}{2}} \tcr{dx}  \right]^{\rfrac{1}{r'}},\\
		&\leq \left[ \int_{\Omega} I^{sr'}(\bs{Du}, \bs{Dv}) \tcr{dx} \right]^{\rfrac{1}{sr'}} \left[ \int_{\Omega} \left( \nu_1 + \nu_2 \left( \abs{\bs{Du}} + \abs{\bs{Du}} \right)^{r-2} \right)^{\rfrac{s'r'}{2}}  \tcr{dx} \right]^{\rfrac{1}{s'r'}}.
		\end{align*}
		\nin We choose $s,s'$ such that $\ds \rfrac{1}{s} + \rfrac{1}{s'} = 1, r's = 2$ and $\ds r's' = \rfrac{2r}{r-2}$. Then we obtain
		\begin{equation*}
		\norm{\bs{S}(\bs{Du}) - \bs{S}(\bs{Dv})}_{r'} \leq \left[ \int_{\Omega} I^2(\bs{Du}, \bs{Dv}) \tcr{dx} \right]^{\rfrac{1}{2}} \underbrace{\left[ \int_{\Omega} \left( \nu_1 + \nu_2 \left( \abs{\bs{Du}} + \abs{\bs{Du}} \right)^{r-2} \right)^{\rfrac{r}{r-2}} \tcr{dx}  \right]^{\rfrac{r-2}{2r}}}_{M}.
		\end{equation*}
		\nin Now we compute,
		\begin{align*}
		M &= \nu_1^{\rfrac{1}{2}}\left[ \int_{\Omega} \left( 1 + \nu_1^{-1}\nu_2 \left( \abs{\bs{Du}} + \abs{\bs{Du}} \right)^{r-2} \right)^{\rfrac{r}{r-2}} \tcr{dx} \right]^{\rfrac{r-2}{2r}},\\
		&\leq c_{13} \nu_1^{\rfrac{1}{2}}\left[ 1 + \nu_1^{-1}\nu_2 \int_{\Omega} \left( \abs{\bs{Du}}^r + \abs{\bs{Du}}^r \right) \tcr{dx} \right]^{\rfrac{r-2}{2r}},\\
		&\leq c_{14} \nu_1 \left( 1 + M_r \right),
		\end{align*}
		cf \eqref{M_r}. Note that the integral above cannot be bounded directly by $B_r$ in \eqref{B_r}. But $U$ in Lemma \ref{lem-Br} is bounded by $B_r$. Combining \eqref{L_1}, \eqref{est_IDD}
		\begin{equation*}
		\tcr{I_1} \leq U \norm{\chi - \psi}_{H_{\ell}}.
		\end{equation*}
		\nin Now we proceed to the estimate
		\begin{equation*}
		\tcr{I_2} \leq \int_{\ell}^{2\ell} \int_{\Omega} \abs{\bs{w}} \left( \abs{\bs{u}} + \abs{\bs{v}} \right) \abs{\nabla \varphi} \tcr{dx}\tcr{dt} \leq \left[\int_{\ell}^{2\ell} \norm{\abs{\bs{w}} \left( \abs{\bs{u}} + \abs{\bs{v}} \right)} \tcr{dt} \right]^{\rfrac{1}{r'}}.
		\end{equation*}
		\nin Then we compute
		\begin{align*}
		\norm{\abs{\bs{w}} \left( \abs{\bs{u}} + \abs{\bs{v}} \right)}_{r'} &= \left[\int_{\Omega} \abs{\bs{w}}^{r'} \left( \abs{\bs{u}} + \abs{\bs{v}} \right)^{r'}\tcr{dx} \right]^{\rfrac{1}{r'}},\\
		&\leq c_{15} \norm{\bs{w}}_2 \left( \norm{\bs{u}}_{\frac{2r}{r-2}} + \norm{\bs{v}}_{\frac{2r}{r-2}} \right).
		\end{align*}
		\nin We consider two cases
		\begin{enumerate}[(i)]
			\item case $r \in (12/5,3]$. Using \eqref{bd-2r-3}, \eqref{B_r}, we obtain
			\begin{align*}
			I_2 &\leq c_{16} \left[ \int_{\ell}^{2\ell} \norm{\bs{w}}_2^2 \tcr{dt} \right]^{\rfrac{1}{2}} \sup_{t \in (\ell, 2 \ell)}  \left[ \norm{\bs{u}}_{\frac{2r}{r-2}} + \norm{\bs{v}}_{\frac{2r}{r-2}} \right],\\
			&\leq c_{17}B_0^{\frac{5r-12}{5r-6}}B_r^{\frac{6}{5r-6}}\left[ \int_{\ell}^{2\ell} \norm{\bs{w}}_2^2 \tcr{dt} \right]^{\rfrac{1}{2}},\\
			&\leq W\norm{\chi - \psi}_{H_{\ell}}.
			\end{align*}
			\nin satisfying the first part of \eqref{W}.
			\item case $r > 3$. Using \eqref{bd-2r-2}, \eqref{B_r}, we obtain
			\begin{align*}
			I_2 &\leq c_{16} \left[ \int_{\ell}^{2\ell} \norm{\bs{w}}_2^2 \tcr{dt} \right]^{\rfrac{1}{2}} \sup_{t \in (\ell, 2 \ell)}  \left[ \norm{\bs{u}}_{\frac{2r}{r-2}} + \norm{\bs{v}}_{\frac{2r}{r-2}} \right],\\
			&\leq c_{18}B_0^{\frac{r-2}{r}}B_r^{\frac{2}{r}}\left[ \int_{\ell}^{2\ell} \norm{\bs{w}}_2^2 \tcr{dt} \right]^{\rfrac{1}{2}},\\
			&\leq W\norm{\chi - \psi}_{H_{\ell}}.
			\end{align*}
			\nin \nin satisfying the second part of \eqref{W}. \tcr{ Finally, we estimate 
				\begin{equation*}
				I_3 \leq Q \norm{\chi - \psi}_{H_{\ell}},
				\end{equation*}
				\nin with \eqref{s-prop}. This concludes the proof of the Lemma.}
			
		\end{enumerate}
	\end{proof}
	\nin Now we formulate the main result.
	\begin{thm} \label{thm:main}
		Let the stress tensor satisfy \eqref{prop_S}, \eqref{prop_Phi} with $\ds r > \rfrac{12}{5}$. Then \eqref{nse_L} has a global attractor $\calA$, and its dimension can be estimated as 
		\begin{equation}
		d^H_f \left( \calA \right) \leq \tcr{c_{19} \left( L_1^4 + \ell L_1^{\frac{2(11r-6)}{3r}}L_2 \right) \ln L_1},
		\end{equation}
		where $L_1,\ L_2$ and $\ell$ are given in Lemma \ref{lem-L}.
	\end{thm}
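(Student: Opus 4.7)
The plan is to assemble the previously-established ingredients via the method of trajectories from \cite{MP:2002}, following the template of \cite{BEKP:2009}. The global attractor $\calA = \omega(\calB)$ exists because $\calB$ from Lemma~\ref{lem-Br} is absorbing and positively invariant, the semigroup $S(t)$ becomes single-valued on $\calB$ for positive times by Theorems~\ref{thm:uniq} and \ref{thm:regdt}, and asymptotic compactness is built into the existence proof of Theorem~\ref{thm-wp}. The identity $d_f^H(\calA) = d_f^{H_\ell}(\All)$, a consequence of the Lipschitz continuity of $e$ and $b$, reduces the problem to estimating the fractal dimension of $\All$ inside $H_\ell = L^2(0,\ell;H)$.

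The smoothing property supplied by Lemma~\ref{lem-L} is the central tool: $\calL$ maps $H_\ell$ Lipschitz-continuously into
\[
Y := \{\, \chi \in L^2(0,\ell;V_2) \, : \, \partial_t \chi \in L^2(0,\ell;V_r') \,\},
\]
with constants $L_1$ and $L_2$ in the two components. Since $V_2 \hookrightarrow H$ is compact and $H \hookrightarrow V_r'$ is continuous, the Aubin--Lions lemma gives that $Y \hookrightarrow H_\ell$ compactly. The abstract smoothing-based dimension estimate (\cite[Lemma~1.3]{MP:2002}; see also \cite[Theorem~4.1]{BEKP:2009}) then yields
\[
d_f^{H_\ell}(\All) \lesssim \log_2 N_{H_\ell}\bigl(B_Y,\, c/K\bigr),
\]
where $B_Y$ is the unit ball of $Y$ and $K$ is a combined Lipschitz constant assembled from $L_1$, $L_2$, and $\ell$.

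What remains is an asymptotic covering-number estimate $N_{H_\ell}(B_Y,\varepsilon) \le C\varepsilon^{-d(r)}$, to be derived in the Appendix. The natural route is spectral truncation with respect to a Stokes-type eigenbasis compatible with the dynamic boundary condition: the low-frequency modes are covered by a finite-dimensional volume argument, the high-frequency tail is controlled by the smoothing embedding $Y \hookrightarrow H_\ell$, and the cut-off is optimized. Plugging this covering bound together with the explicit values of $L_1,\,L_2,\,\ell$ from \eqref{L_1}--\eqref{W} into the abstract dimension inequality yields, after rearranging the two pieces arising from the $L^2(V_2)$ and $W^{1,2}(V_r')$ components,
\[
d_f^H(\calA) \le c_{19}\bigl(L_1^4 + \ell L_1^{2(11r-6)/(3r)} L_2\bigr) \ln L_1.
\]

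The main obstacle will be the precise bookkeeping of the exponent $2(11r-6)/(3r)$: the $L_1^4$ term reflects covering numbers of a bounded set in $Y$ inside $H_\ell$ arising from three spatial plus one temporal dimension, whereas the mixed term requires interpolating between the spatial smoothing $V_2 \hookrightarrow H$ and the temporal regularity $\partial_t \chi \in L^2(0,\ell;V_r')$. The interpolation exponent degenerates exactly as $r \searrow 12/5$, where $V_r'$ becomes too rough to control the convective term, which is why the threshold $r > 12/5$ reappears here as the condition for meaningful dimension control. Aside from this careful exponent tracking, the argument is a direct combination of the smoothing inequality from Lemma~\ref{lem-L}, the compact embedding, and the finite-dimensional covering estimates of the Appendix.
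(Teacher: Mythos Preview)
Your proposal is correct and follows essentially the same route as the paper: the paper's proof simply invokes the method-of-trajectories machinery of \cite[Theorem~4.1]{BEKP:2009}, feeding in the Lipschitz constants from Lemma~\ref{lem-L} and the covering estimate of Lemma~\ref{lem-cover}, exactly as you outline. One minor interpretive correction: the exponent $2(11r-6)/(3r)$ does not degenerate as $r \searrow 12/5$ (it tends to $17/3$); the threshold $r>12/5$ enters earlier, through the time-regularity Theorem~\ref{thm:regdt} needed to make $B_r$ finite, rather than through the covering estimate itself.
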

	\begin{proof}
		Follows exactly along the arguments of \cite[Theorem
		4.1]{BEKP:2009}, using the estimates of Lemma~\ref{lem-L} above
		and Lemma~\ref{lem-cover} below.
	\end{proof}
	
	\appendix
	\section{Coverings and Fractal Dimension.}\label{appen-cov}
	Now we present an elementary description of a class of Sobolev
	and Bochner spaces with fractional derivatives. These
	formulations will be used to obtain covering numbers for
	compact embeddings. We follow a similar technique used in
	\cite[Section 7: Appendix]{BEKP:2009}, or \cite[Secion 4]{BMP:2005}. Consider the following inhomogeneous Stokes problem,
	\begin{subequations}
		\begin{empheq}[left=\empheqlbrace]{align}
		\partial_t \bs{v} - \div \ \bs{D} \bs{v} + \nabla \pi = \bs{f}, \quad \div \ \bs{v} &= 0 &\text{ in } Q,\\
		\bs{v} \cdot \bs{n} = 0, \quad -(\bs{D}\bs{v})\bs{n} + \alpha \bs{v} &= \beta \partial_t \bs{v} &\text{ on } \Gamma,\\
		\bs{v}(0) &= \bs{v}_0 &\text{ in } \overline{\Omega}.
		\end{empheq}
	\end{subequations}
	% \tcr{We have to change a little because of the Leray projection. We can directly quote \cite{PZ:2022a} and use that way to represent $\calA$.}
	%	We write the above system in operator equation form as follows.
	%	\begin{subequations}
	%		\begin{empheq}[left=\empheqlbrace]{align}
	%		\frac{d}{dt} \bbm \bs{v} \\ \bs{v}|_{\partial \Omega} \ebm - \bbm P\div \ \bs{D} & 0 \\ 0 & \frac{1}{\beta} \left[ -\bs{D} \cdot \bs{n} + \alpha \right] \ebm \bbm \bs{v} \\ \bs{v}|_{\partial \Omega} \ebm &= \bbm P \bs{f} \\ 0 \ebm,\\
	%		\bs{v}(0) &= \bs{v}_0 &\text{ in } \overline{\Omega}.
	%		\end{empheq}
	%		where $P: L^2(\Omega) \longrightarrow L^2_{\sigma}(\Omega)$ is the Leray projection.
	%	\end{subequations}
	%	\nin We define the operator in the matrix form,
	%	\begin{align}
	%	\calA &= \bbm P\div \ \bs{D} & 0 \\ 0 & \frac{1}{\beta} \left[ -\bs{D} \cdot \bs{n} + \alpha \right] \ebm,\\
	%	\calD(\calA) &= \left\{ \left(\bs{v}, \bs{v}|_{\Gamma} \right) \in H: \bs{v} \in W^{2,2}(\Omega) \cap L^2_{\sigma}(\Omega)  \right\}.
	%	\end{align}
	\tcr{\nin Then the above dynamical system defines the operator $\calA$ which generates a strongly continuous analytic semigroup on $H$ with a compact resolvent with domain $\calD(\calA) \subset \subset V$, see \cite[Theorem 1, p. 7]{PZ:2022a}. We thus have linear (unbounded) operator $\calA: \calD(\calA) \to H$ satisfying,
		\begin{equation}
		\left(\bs{u}, \bs{\varphi}\right)_V = \left( \calA \bs{u}, \bs{\varphi} \right)_H, \ \forall \bs{u} \in \calD(\calA), \ \forall \varphi \in V.
		\end{equation}
		Moreover, from the same reference we have that $\calA$ is surjective, and is also symmetric on its domain, i.e. for any $\bs{u}, \bs{v} \in \calD(\calA)$ we have
		\begin{equation}
		\left( \calA \bs{u}, \bs{v} \right)_H = \left( \bs{u}, \calA\bs{v} \right)_H.
		\end{equation}}
	\nin Then by virtue of \cite[Section 5, p. 168]{BPDM:2007}, we can define the domains fractional powers of the operator $\calA$. Then by \cite[Theorem 1.15.3, p. 114]{HT:1980} for $0 \leq \theta \leq 1$,
	\begin{equation}
	\calD(\calA^{\theta}) \subset \left[L^2(\Omega) \times L^2(\partial \Omega), H^2(\Omega) \times L^2(\partial \Omega) \right]_{\theta} \hookrightarrow H^{2\theta}(\Omega) \times L^2(\partial \Omega),
	\end{equation}
	
	\nin where $H^{s}(\Omega) = W^{s,2}(\Omega), \ s \in \BR$. For more details on domains of fractional powers of matrix-valued operators, we refer to \cite{LT:2015} and references therein. Let $\bs{w}_j, \lambda_j = 1,2,\dots$ be the eigenfunctions and eigenvalues of the operator $\calA$ respectively.
	\begin{subequations}\label{eigen}
		\begin{align}
		-\div \ \bs{D}\bs{w}_j &= \lambda_j \bs{w}_j,  &\text{ in } \Omega\\ 
		\div \ \bs{w}_j &= 0, &\text{ in } \Omega\\
		\bs{D}\bs{w}_j\bs{n} + \alpha \bs{w}_j &= \lambda_j \beta \bs{w}_j &\text{ in } \Gamma
		\end{align}
	\end{subequations}
	\nin Note that we have taken $\partial_t \bs{v} = -\lambda_i \bs{v}$ for $\lambda_i$ to be nonnegative. One can show that $\{ \bs{w}_j \}_{j \in \BN}$ is a basis for $V$ and $H$, it is orthogonal in $V$ and orthonormal in $H$. Moreover, we have $\lim_{i \to \infty} \lambda_i = + \infty$. See \cite[Lemma 3.1]{Mar:2019}. We also have 
	\begin{equation}\label{ub-lam}
	\tcr{C j^{\rfrac{1}{2}} \leq \lambda_j \leq \wti{C} j^{\rfrac{2}{3}},}
	\end{equation}
	for dimension $d = 3$ by \cite[Section 3.2]{PZ:2022a}\tcr{, and Lemma \ref{lam-ub} for some positive constants $C$, $\wti{C}$}. For $b \in \BR$, one introduces the space $\mathbb{H}^b := H^b(\Omega) \times L^2(\partial \Omega)$ as
	\begin{equation}
	\ds \mathbb{H}^b = H^b(\Omega) \times L^2(\partial \Omega).
	\end{equation}
	%\begin{equation}
	%	\norm{\bs{v}}^2_{\mathbb{H}^b} = \begin{cases} \ds \sum_j a_j^2 \lambda_j^b + \beta \sum_j \gamma^2_j, \quad &\text{if } b \geq \frac{1}{2},\\
	%	\ds \sum_j a_j^2 \lambda_j^b, \quad &\text{otherwise}.
	%	\end{cases}
	%\end{equation}	
	%where 
	%\begin{equation}
	%	 a_j = \int_{\Omega} \bs{v}(x)\cdot\bs{w}_j(x)dx, \  \gamma_j =  \int_{\Gamma} \bs{v}(s)\cdot\bs{w}_j(s)dS.
	%\end{equation}
	Let us define, $\mathbb{H}^{-b} := \left( H^b(\Omega) \right)' \times L^2(\partial \Omega)$, with the duality given by the generalized scalar product in $H$. Further, we define $\mathbb{H}^b$ as a class of interpolation spaces in the sense that $\ds \left[ \mathbb{H}^{b_1}, \mathbb{H}^{b_2} \right]_{\alpha} = \mathbb{H}^b$, where $b = (1 - \alpha)b_1 + \alpha b_2$. To relate $\mathbb{H}^b$ to classical Sobolev spaces (product), observe that
	\begin{align*}
	\norm{(\bs{u}, \bs{g})}_{\mathbb{H}^0}^2 &= \norm{(\bs{u}, \bs{g})}^2_H = \norm{\bs{u}}^2_2 + \beta \norm{\bs{g}}^2_{L^2(\partial \Omega)},\\
	\ipp{\calA \bs{u}}{\bs{u}}_{H} &= \norm{\calA^{\rfrac{1}{2}}\bs{u}}^2_{H} = \sum_{j} a^2_j \lambda_j, \\
	&= \norm{\bs{v}}^2_{H^1(\Omega)} + \beta \norm{tr \ \bs{u}}^2_{L^2(\partial \Omega)} \sim \norm{\bs{u}}_{\mathbb{H}^1}^2 \sim \norm{\bs{u}}^2_V, \text{ and}\\
	\text{ compute } \calA \bs{u} &= \sum_{j} a_j\calA \bs{w}_j = \sum_j a_j\lambda_j \bs{w}_j, \text{ hence } \norm{\calA \bs{u}}_{H}^2 = \norm{\bs{u}}_{\mathbb{H}^2}^2.
	\end{align*}  
	
	\nin Similarly, an orthonormal basis for $L^2(0,\ell)$ will be defined as
	\begin{equation}
	\varphi_0(t) = {\ell}^{-\rfrac{1}{2}}, \quad \varphi_k(t) = 2^{\rfrac{1}{2}}\ell^{-\rfrac{1}{2}}\cos(k\pi t \ell^{-1}), \quad k \geq 1.
	\end{equation} 
	\nin One sets $\mu_0 = \ell^{-2}, \ \mu_k = k^2 \pi^2 \ell^{-2}$. The space $H^a(0,\ell)$ is defined as
	\begin{equation*}
	\norm{\phi}^2_{H^a(0,\ell)} = \sum_k a_k^2\mu_k^a, \quad a_k = \int_{0}^{\ell}\phi(t)\varphi_k(t)dt.
	\end{equation*}
	The seminorm $\dot{H}^a(0,\ell)$ will also be used, 
	\begin{equation*}
	\norm{\phi}^2_{\dot{H}^a(0,\ell)} = \sum_{k \neq 0} a_k^2\mu_k^a,
	\end{equation*}
	and the space $H^a_0(0,\ell)$, in the definition of which $\varphi_k(t)$s are replaced by 
	\begin{equation*}
	\psi_k(t) = 2^{\rfrac{1}{2}}\ell^{-\rfrac{1}{2}}\sin(k\pi t \ell^{-1}), \quad k \geq 0.
	\end{equation*} 
	\nin Note that 
	\begin{equation}\label{mu}
	\begin{aligned}
	\mu_k \sim k^2 \ell^{-2},\\
	\abs{\varphi_k(t)}, \abs{\psi_k(t)} \leq c_1 \ell^{-\rfrac{1}{2}}.
	\end{aligned}
	\end{equation}
	\nin The dependence on $\ell$ has to be carefully traced down, since $\ell << 1$ in the applications.\\ 
	
	\nin Now we combine $\bs{w}_j, \varphi_k$ to describe certain norms of fractional Bochner spaces. For $\bs{u}(x,t):\Omega \times (0,\ell) \longrightarrow \BR^3$, one sets
	\begin{align*}
	\norm{\bs{u}}^2_{H^a(0,\ell;\mathbb{H}^b)} &= \sum_{j,k} a_j^2 \lambda_j^b \mu_k^a,\\% + \beta \sum_{jk} \gamma^2_{jk}, \ b \geq \frac{1}{2}, \\
	\norm{\bs{u}}^2_{\dot{H}^a(0,\ell;\mathbb{H}^b)} &= \sum_{j,k \neq 0} a_j^2 \lambda_j^b \mu_k^a,\\% + \beta \sum_{jk} \gamma^2_{jk}, \ b \geq \frac{1}{2}, \\
	\text{ where } a_{jk} &= \int_{\Omega \times \partial \Omega \times (0,\ell)} \bs{v}(x,t)\cdot\bs{w}_j(x)\varphi_k(t)dxdt.
	\end{align*}
	\nin As above, there is the introduction $H^a_0(0,\ell;\mathbb{H}^b)$ using $\psi_k$ in place of $\varphi_k$. It is straightforward to verify that 
	\begin{equation}\label{eq-nr-H}
	\norm{\bs{u}}_{L^2(0,\ell;\mathbb{H}^b)} = \norm{\bs{u}}_{H^0(0,\ell;\mathbb{H}^b)} = \norm{\bs{u}}_{H^0_0(0,\ell;\mathbb{H}^b)}.
	\end{equation} 
	In the following Lemma from \cite[Lemma 7.1]{BEKP:2009}, it is proven that the seminorm $\dot{H}^1(0,\ell)$ can be estimated in terms of the time derivative. The value of $b$ given in \eqref{eq-b} is obtained by \eqref{emb}. 
	
	\begin{lemma}\label{lem-emb-H}
		Let $r \geq 2$ and let $b$ be given by 
		\begin{equation}\label{eq-b}
		b = \frac{5r - 6}{2r}, \ \text{i.e. } b \geq 1.
		\end{equation}
		Then 
		\begin{equation*}
		\norm{\bs{u}}_{\dot{H}^1(0,\ell;\mathbb{H}^{-b})} \leq c_1 \norm{\partial_t \bs{u}}_{L^2(0,\ell;V_r')}
		\end{equation*}
		Here $\partial_t$ stands for the distributional derivative in $\Omega \times (0,\ell)$.
	\end{lemma}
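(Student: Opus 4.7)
The plan is to diagonalize everything using the Fourier-type expansions introduced above, reducing the inequality to an abstract Sobolev embedding. Expanding $\bs{u}(x,t) = \sum_{j\geq 1,\, k\geq 0} a_{jk}\bs{w}_j(x)\varphi_k(t)$, and using that $\partial_t\varphi_k = -\mu_k^{1/2}\psi_k$ for $k \geq 1$ while $\partial_t\varphi_0 = 0$, one obtains distributionally
\[
\partial_t \bs{u} \,=\, -\sum_{j \geq 1,\, k \geq 1}\mu_k^{1/2}\, a_{jk}\, \bs{w}_j(x)\,\psi_k(t).
\]
Since $\{\bs{w}_j \psi_k\}_{j,k\geq 1}$ is an orthonormal system in $L^2(0,\ell;H)$ and the $\mathbb{H}^{-b}$-norm amounts to weighting the spatial coefficients by $\lambda_j^{-b}$, Parseval's identity yields the key equality
\[
\norm{\partial_t \bs{u}}^2_{L^2(0,\ell;\mathbb{H}^{-b})} \,=\, \sum_{j,k \geq 1} \mu_k \lambda_j^{-b} a_{jk}^2 \,=\, \norm{\bs{u}}^2_{\dot{H}^1(0,\ell;\mathbb{H}^{-b})}.
\]

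It therefore suffices to establish the continuous embedding $V_r' \hookrightarrow \mathbb{H}^{-b}$. By duality with respect to the $H$-inner product (which is the pairing implicit in the spectral definition of $\mathbb{H}^{-b}$), this is equivalent to $\mathbb{H}^b \hookrightarrow V_r$. The $L^2(\partial\Omega)$ boundary factor embeds trivially into itself, so all the content lies in the embedding $H^b(\Omega) \hookrightarrow W^{1,r}(\Omega)$ on the three-dimensional Lipschitz domain $\Omega$. By the classical Sobolev embedding, $W^{b,2}(\Omega) \hookrightarrow W^{1,r}(\Omega)$ holds exactly when $b - \tfrac{3}{2} \geq 1 - \tfrac{3}{r}$, which is precisely saturated by $b = \tfrac{5r-6}{2r}$; the hypothesis $r \geq 2$ then guarantees $b \geq 1$, keeping us safely in the non-degenerate Sobolev regime.

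The main subtlety I anticipate is matching the two duality pairings in play, namely the $H$-pairing implicit in the spectral definition of $\mathbb{H}^{-b}$ with the standard $V_r'$--$V_r$ pairing. This is reconciled by our choice of basis: each $\bs{w}_j$ is solenoidal with $\bs{w}_j \cdot \bs{n} = 0$ on $\partial\Omega$ and $H$-orthonormal, so every finite linear combination $\sum a_j \bs{w}_j$ already lies in $V_r$ with the trace condition $\bs{g} = tr\,\bs{v}$ built in, making the two pairings coincide there. The identification then extends by density of the spectral expansion, delivering the embedding with constant $c_1$ depending only on the Sobolev embedding constant, and in particular independent of $\ell$, which will matter downstream since $\ell \ll 1$ in the applications.
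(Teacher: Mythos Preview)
Your proposal is correct and follows essentially the same route as the paper, which does not reproduce the argument but defers to \cite[Lemma 7.1]{BEKP:2009} and merely notes that the value of $b$ is dictated by the Sobolev embedding \eqref{emb}. Your explicit computation --- identifying $\norm{\bs{u}}_{\dot{H}^1(0,\ell;\mathbb{H}^{-b})}$ with $\norm{\partial_t \bs{u}}_{L^2(0,\ell;\mathbb{H}^{-b})}$ via $\partial_t\varphi_k = -\mu_k^{1/2}\psi_k$, and then reducing to the dual embedding $\mathbb{H}^b \hookrightarrow V_r$ through $H^b(\Omega)\hookrightarrow W^{1,r}(\Omega)$ --- is exactly the intended argument, and your remark that the constant is independent of $\ell$ is the relevant point for the downstream application.
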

	\nin Then we obtain the following two Lemmas from \cite[Lemma 7.2, Lemma 7.3]{BEKP:2009} by devising similar computations. %The difference of the estimates depend on $\ds \lambda_j \geq C j^{\rfrac{1}{2}}$.

	\begin{lemma}\label{Lem-7.2}
		Let $r \geq 2,\ \ell > 0$ and $C_1, C_2 >> 1$. Denote
		\begin{equation*}
		\calM = \left\{ \bs{u}: \norm{\bs{u}}_{L^2(0,\ell;V_2)} \leq C_1, \ \norm{\partial_t\bs{u}}_{L^2(0,\ell;V'_r)} \leq C_2 \right\}.
		\end{equation*}
		There exists orthonormal projection $\calP$ in $L^2(0,\ell;H)$ such that 
		\begin{equation}\label{est-dist}
		\text{dist}\left( \calM, \calP(\calM) \right) \leq \frac{1}{\sqrt{8}},
		\end{equation}
		and 
		\begin{equation}\label{rankP}
		\text{rank } \calP \leq c_2 \tcr{\left( C_1^4 + \ell C_1^{\frac{2(11r - 6)}{3r}}C_2 \right)}
		\end{equation}
	\end{lemma}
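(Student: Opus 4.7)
The plan is to construct $\calP$ as the orthogonal projection onto the linear span of a finite family of tensor--product basis elements $\{\bs{w}_j \varphi_k\}_{(j,k) \in \Lambda}$, where the retained index set $\Lambda$ will be carefully chosen so that both a ``spatial'' tail in $j$ and a ``temporal'' tail in $k$ each contribute at most $1/16$ to the squared distance. Since $\norm{u}_{L^2(0,\ell;H)}^2 = \sum_{j,k} a_{jk}^2$ with coefficients $a_{jk} = \ipp{u}{\bs{w}_j \varphi_k}$, squaring \eqref{est-dist} reduces to bounding $\sum_{(j,k) \notin \Lambda} a_{jk}^2$.

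The two a priori bounds on $\calM$ translate to spectral estimates via the identifications on $\mathbb{H}^b$: the hypothesis $\norm{u}_{L^2(0,\ell;V_2)} \leq C_1$ gives $\sum_{j,k} a_{jk}^2 \lambda_j \lesssim C_1^2$ (by the $\mathbb{H}^1 \sim V$ equivalence), while Lemma~\ref{lem-emb-H} applied to $\norm{\partial_t u}_{L^2(0,\ell;V'_r)} \leq C_2$ gives the weighted bound
\begin{equation*}
\sum_{j,\, k \neq 0} a_{jk}^2\, \mu_k\, \lambda_j^{-b} \lesssim C_2^2, \qquad b = \frac{5r-6}{2r}.
\end{equation*}
I would then define $\Lambda := \{(j,k) : \lambda_j \leq N_1 \text{ and } \mu_k \leq M \lambda_j^{b}\}$ with $N_1 \sim C_1^2$ and $M \sim C_2^2$, and let $\calP$ be the orthogonal projection onto $\operatorname{span}\{\bs{w}_j \varphi_k : (j,k) \in \Lambda\}$.

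For the distance bound I split the complement $\Lambda^c$ into the spatial part $\{\lambda_j > N_1\}$ and the temporal part $\{\lambda_j \leq N_1,\, \mu_k > M\lambda_j^{b}\}$. The first is controlled by Chebyshev:
\begin{equation*}
\sum_{\lambda_j > N_1}\sum_k a_{jk}^2 \leq N_1^{-1} \sum_{j,k} a_{jk}^2 \lambda_j \lesssim N_1^{-1} C_1^2 \leq \tfrac{1}{16},
\end{equation*}
and the second analogously using the weighted bound: on the region $\mu_k > M\lambda_j^b$ one has $\mu_k^{-1} \lambda_j^b \leq M^{-1}$, so
\begin{equation*}
\sum_{\lambda_j \leq N_1}\sum_{\mu_k > M \lambda_j^b} a_{jk}^2 \leq M^{-1}\sum_{j, k \neq 0} a_{jk}^2 \mu_k \lambda_j^{-b} \lesssim M^{-1} C_2^2 \leq \tfrac{1}{16}.
\end{equation*}

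For the rank count I would use the two-sided eigenvalue asymptotics \eqref{ub-lam} and the relation $\mu_k \sim k^2/\ell^2$. The lower bound $\lambda_j \geq C j^{1/2}$ gives $|\{j : \lambda_j \leq N_1\}| \lesssim N_1^2 \sim C_1^4$, while for each such $j$ the count of admissible $k$'s is $\lesssim \ell \sqrt{M}\,\lambda_j^{b/2} \sim \ell C_2 \lambda_j^{b/2}$. Summing with the upper bound $\lambda_j \leq \wti{C} j^{2/3}$ yields
\begin{equation*}
\sum_{j=1}^{J} \lambda_j^{b/2} \lesssim J^{\,b/3 + 1}, \qquad J \sim C_1^4,
\end{equation*}
and the exponent collapses to $4(b/3+1) = 2(11r-6)/(3r)$, matching \eqref{rankP} exactly. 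The main obstacle is the careful bookkeeping of this last exponent: one has to line up the Weyl exponent $2/3$ for $\lambda_j$ in dimension three with the value of $b$ coming from Lemma~\ref{lem-emb-H}, and track the $\ell$-factor through $\mu_k \sim k^2/\ell^2$ without losing the precise power. The rest reduces to the routine Chebyshev/orthogonality estimates sketched above.
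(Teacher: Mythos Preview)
Your proposal is correct and follows essentially the same approach as the paper's own proof: the same tensor--product projection onto $\{\bs{w}_j\varphi_k : \lambda_j \lesssim C_1^2,\ \mu_k \lesssim C_2^2\lambda_j^b\}$, the same Chebyshev-type tail estimates using the two spectral bounds, and the identical rank computation combining the lower Weyl bound $\lambda_j \gtrsim j^{1/2}$ with the upper bound $\lambda_j \lesssim j^{2/3}$. Your splitting of $\Lambda^c$ into disjoint pieces is in fact slightly cleaner than the paper's ``or'' formulation; just remember to include the $k=0$ modes (or the ``$+1$'' in the $k$-count) so that the $C_1^4$ term in \eqref{rankP} appears explicitly.
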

	\begin{proof}
		The proof of this lemma follows similar argumentation as \cite[Lemma 7.2]{BEKP:2009}. By virtue of \eqref{eq-nr-H} and Lemma \ref{lem-emb-H}, $\calM$ can described by $\ds H^0(0,\ell;\mathbb{H}^1)$  and $\ds H^0(0,\ell;\mathbb{H}^{-b})$, where $\ds b = \rfrac{(5r-6)}{2r}$. Then the Fourier coefficients of $\bs{u} \in \calM$ satisfy 
		\begin{equation}
		\sum_{j,k} a_{jk}^2 \lambda_j \leq c_3 C_1^2, \quad \sum_{j,k \neq 0} a_{jk}^2 \lambda_j^{-b}\mu_k \leq c_4 C_2^2.
		\end{equation}
		\nin Hence it is enough to take $\calP$ as the projection to the span of %\tcr{
		%\begin{equation*}
		%\left\{ \bs{w}_j\varphi_k: \ \lambda_j \leq 8c_3C_1^2 \text{ and } \mu_k \leq 8c_4 j^{\rfrac{b}{2}}C_2^2 \right\}.
		%\end{equation*}}
		%\nin Then by \eqref{ub-lam}, we have,
		\begin{equation*}
		\left\{ \bs{w}_j\varphi_k: \ \lambda_j \leq 8c_3C_1^2 \text{ and } \mu_k \leq 8c_4 \lambda_j^bC_2^2 \right\}.
		\end{equation*}
		\nin First, we show that \eqref{est-dist} holds. First observe that, for $\bs{u} \in \calM$,
		\begin{equation*}
		\bs{u} = \sum_{j,k} a_{jk}\bs{w}_j\varphi_k, \ \text{ and } \quad \calP \bs{u} = \sum_{\left\{ \lambda_j \leq 8c_3C_1^2,\ \mu_k \leq 8c_4 \lambda_j^bC_2^2 \right\}} a_{jk}\bs{w}_j\varphi_k.
		\end{equation*}
		Now we estimate
		\begin{equation*}
		\norm{\bs{u} - \calP \bs{u}}^2_{H} = \sum_{\left\{ \lambda_j > 8c_3C_1^2,\ \text{ or } \mu_k > 8c_4 \lambda_j^bC_2^2 \right\}} a_{jk}^2.
		\end{equation*}
		\nin We further estimate above two different cases separately,
		\begin{equation*}
		\norm{\bs{u} - \calP \bs{u}}^2_{H} = \sum_{\left\{ \lambda_j > 8c_3C_1^2 \right\}} a_{jk}^2, \ \text{ or } \ \norm{\bs{u} - \calP \bs{u}}^2_{H} = \sum_{\left\{ \mu_k > 8c_4 \lambda_j^bC_2^2 \right\}} a_{jk}^2.
		\end{equation*}
		\nin Furthermore,
		\begin{equation*}	
		\norm{\bs{u} - \calP \bs{u}}^2_{H} =\sum_{\left\{ \lambda_j > 8c_3C_1^2 \right\}} a_{jk}^2 \lambda_j \frac{1}{\lambda_j}, \ \text{ or }  \norm{\bs{u} - \calP \bs{u}}^2_{H} = \sum_{\left\{ \mu_k > 8c_4 \lambda_j^bC_2^2 \right\}} a_{jk}^2 \lambda_j^{-b}\mu_k \frac{\lambda_j^b}{\mu_k},
		\end{equation*}
		\nin combining both cases, we obtain
		\begin{equation}
		\norm{\bs{u} - \calP \bs{u}}^2_{H} \leq \frac{1}{8}.
		\end{equation}
		\nin Hence \eqref{est-dist}. Now we recall \eqref{ub-lam} and \eqref{mu}, we estimate
		\tcr{ 
			\begin{align}
			\text{rank } \calP &\leq \sum_{\left\{j \ \leq \ c_5C_1^4 \right\}} \left( 1 + C_2 \ell j^{\rfrac{b}{3}} \right) \leq c_{6} \left( C_1^4 + \ell C_2 C_1^{\frac{4(b + 3)}{3}} \right), \nonumber\\
			&= c_6 \left( C_1^4 + \ell C_1^{\frac{2(11r - 6)}{3r}}C_2 \right),
			\end{align}
		}
	\end{proof}
	\begin{lemma}	\label{lem-cover}
		The set $\calM$ from Lemma \ref{Lem-7.2} can be covered by $K$ balls of radii $\ds \rfrac{1}{2}$ in $L^2(0,\ell;H)$, where
		\begin{equation}\label{lnK}
		\ln K \leq \tcr{c_7 \left( C_1^4 + \ell  C_1^{\frac{2(11r - 6)}{3r}}C_2 \right)\ln C_1}.
		\end{equation}
	\end{lemma}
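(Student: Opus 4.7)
The plan is to reduce the covering problem for $\calM \subset L^2(0,\ell;H)$ to a finite-dimensional covering via the projection $\calP$ constructed in Lemma \ref{Lem-7.2}. First I observe that $\calM$ is a bounded set in $L^2(0,\ell;H)$: since $V_2 \hookrightarrow H$ continuously, one has
\begin{equation*}
\norm{\bs{u}}_{L^2(0,\ell;H)} \leq c \norm{\bs{u}}_{L^2(0,\ell;V_2)} \leq c\,C_1
\end{equation*}
for every $\bs{u} \in \calM$. Since $\calP$ is an orthonormal projection in $L^2(0,\ell;H)$, the image $\calP(\calM)$ is contained in a ball of radius $c\,C_1$ inside the finite-dimensional subspace $\rng \calP$, whose dimension $N$ is bounded by \eqref{rankP}.

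Next, I choose a radius $\rho := 1/2 - 1/\sqrt{8} > 0$ and cover this ball in $\rng \calP$ by $\rho$-balls in $L^2(0,\ell;H)$. Using the standard volume-comparison estimate, the number of $\rho$-balls needed is at most
\begin{equation*}
\left(1 + \frac{2 c C_1}{\rho}\right)^N \leq (c' C_1)^N.
\end{equation*}
For every such covering ball centered at some $\bs{u}^* \in \rng \calP$, replace it by the ball of radius $1/2$ around the same center: by \eqref{est-dist}, every $\bs{u} \in \calM$ is within $1/\sqrt{8}$ of some point of $\calP(\calM)$, which in turn is within $\rho$ of some center $\bs{u}^*$, and by the triangle inequality $\norm{\bs{u} - \bs{u}^*} \leq \rho + 1/\sqrt{8} = 1/2$. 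Hence these enlarged balls cover $\calM$.

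Taking logarithms, the number $K$ of balls of radius $1/2$ satisfies
\begin{equation*}
\ln K \leq N \ln(c' C_1) \leq c_7 \left( C_1^4 + \ell C_1^{\frac{2(11r-6)}{3r}} C_2 \right) \ln C_1,
\end{equation*}
under the assumption $C_1 \gg 1$ so that $\ln(c' C_1) \lesssim \ln C_1$. The only step requiring care is checking that the embedding constant from $V_2$ into $H$ indeed controls the radius of the ball containing $\calP(\calM)$ in a data-independent way; otherwise the argument is a direct composition of Lemma \ref{Lem-7.2} with the elementary finite-dimensional covering estimate, so I do not anticipate any serious obstacle.
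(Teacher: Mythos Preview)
Your argument is correct and follows exactly the standard route the paper has in mind: the paper does not spell out a proof here but simply refers to \cite[Lemma 7.3]{BEKP:2009}, and what you wrote is precisely that argument---project onto the finite-dimensional range of $\calP$, cover the resulting $cC_1$-ball there by $\rho$-balls via the volume-comparison bound $(c'C_1)^N$, then enlarge to radius $1/2$ using \eqref{est-dist} and the triangle inequality. The only cosmetic point is that in the proof of Lemma~\ref{Lem-7.2} the norm $\norm{\bs{u}-\calP\bs{u}}$ is in $L^2(0,\ell;H)$ rather than $H$ (the Fourier coefficients $a_{jk}$ carry both spatial and temporal modes), but you already work in that space, so nothing changes.
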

	
	\begin{rmk}
		The difference between the estimate obtained in \cite[Lemma 7.2]{BEKP:2009} and \eqref{rankP} is due to the difference between the lower \tcr{and upper} bounds of the eigenvalues in two cases. In the former, the authors had $\lambda_j \sim c_1j^{\rfrac{2}{3}}$, and in our case we have $\tcr{C j^{\rfrac{1}{2}} \leq \lambda_j \leq \wti{C} j^{\rfrac{2}{3}}}$. This difference is also evident in the estimate \eqref{lnK}.
	\end{rmk}
	
	\section{Appendix}
	\begin{lemma}\cite[Lemma II.2.33, p. 66]{BF:2013}\label{lem-int}
		Let $\Omega$ be any open set of $\BR^d$ and let $u \in L^p(\Omega) \cap L^q(\Omega)$ with $1 \leq p,q \leq \infty$. Then for all $r$ such that 
		\begin{equation*}
		\frac{1}{r} = \frac{\theta}{p} + \frac{1-\theta}{q}, \ 0 < \theta < 1.
		\end{equation*}
		we have $u \in L^r(\Omega)$, and 
		\begin{equation}\label{int-Lp}
		\norm{u}_r \leq \norm{u}^{\theta}_p \norm{u}^{1 - \theta}_q.
		\end{equation}
	\end{lemma}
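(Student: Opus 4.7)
The approach is the classical one: deduce the interpolation bound from a single, carefully tuned application of H\"older's inequality. First I would treat the generic case $1 \le p,q < \infty$, and afterwards deal with the endpoint $p = \infty$ or $q = \infty$ as a separate, essentially trivial observation.

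In the generic case, the plan is to write
\begin{equation*}
|u|^r = |u|^{r\theta} \cdot |u|^{r(1-\theta)}
\end{equation*}
and apply H\"older's inequality to the factors $|u|^{r\theta}$ and $|u|^{r(1-\theta)}$ with conjugate exponents
\begin{equation*}
s := \frac{p}{r\theta}, \qquad s' := \frac{q}{r(1-\theta)}.
\end{equation*}
The defining relation $\frac{1}{r} = \frac{\theta}{p} + \frac{1-\theta}{q}$ is \emph{exactly} the condition $\frac{1}{s} + \frac{1}{s'} = 1$, so these are legitimate conjugate exponents (they are both $\ge 1$ thanks to $0 < \theta < 1$ and $r \ge \min\{p,q\}$). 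H\"older then yields
\begin{equation*}
\int_\Omega |u|^r \, dx \le \left( \int_\Omega |u|^p \, dx \right)^{r\theta/p} \left( \int_\Omega |u|^q \, dx \right)^{r(1-\theta)/q},
\end{equation*}
and raising both sides to the power $1/r$ gives exactly \eqref{int-Lp}, which in particular shows $u \in L^r(\Omega)$.

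For the endpoint cases, suppose e.g.\ $q = \infty$, so the interpolation condition reduces to $\frac{1}{r} = \frac{\theta}{p}$, i.e.\ $r = p/\theta \ge p$. Then I would estimate pointwise $|u|^r = |u|^{r\theta} |u|^{r(1-\theta)} \le |u|^p \, \|u\|_\infty^{r(1-\theta)}$ and integrate, recovering \eqref{int-Lp} directly; the case $p = \infty$ is symmetric. There is no real obstacle here --- the only subtle point worth double-checking is that $s, s' \in [1,\infty]$ in the H\"older step, which follows algebraically from $0 < \theta < 1$ together with $\frac{1}{r} = \frac{\theta}{p} + \frac{1-\theta}{q}$, so the proof is essentially a one-line computation once the decomposition of $|u|^r$ is written down.
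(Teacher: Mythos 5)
Your proof is correct and complete; the conjugate exponents $s = p/(r\theta)$, $s' = q/(r(1-\theta))$ are precisely tuned so that the interpolation condition becomes $1/s + 1/s' = 1$, and the verification that $s,s' \ge 1$ (equivalently $\min(p,q) \le r \le \max(p,q)$) follows from $1/r$ being a convex combination of $1/p$ and $1/q$. The paper states this lemma as a citation to a standard reference without reproducing a proof, and the H\"older argument you give is exactly the classical one that appears there, including the straightforward handling of the endpoint $p=\infty$ or $q=\infty$.
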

	\begin{thm}\cite[p. 173]{BF:2013}
		Let $\Omega$ be a Lipschitz domain in $\BR^d$ with compact boundary. Let $p \in [1,\infty]$ and $\ds q \in \left[p, \frac{pd}{d - p} \right]$. There exists a $C > 0$ such that
		\begin{equation}\label{int-inq}
		\norm{\varphi}_{L^q(\Omega)} \leq C \norm{\varphi}_{L^p(\Omega)}^{1 + \rfrac{d}{q} - \rfrac{d}{p}} \norm{\varphi}_{W^{1,p}(\Omega)}^{\rfrac{d}{p} - \rfrac{d}{q}}, \quad \text{ for all } \varphi \in W^{1,p}(\Omega).
		\end{equation}
	\end{thm}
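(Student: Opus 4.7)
The plan is to derive this Gagliardo--Nirenberg type estimate by combining the Sobolev embedding $W^{1,p}(\Omega) \hookrightarrow L^{p^*}(\Omega)$, where $p^* = pd/(d-p)$, with the Lebesgue interpolation inequality already recorded as Lemma~\ref{lem-int}. Since $q$ lies in the interval $[p, p^*]$, one should expect that the two scale-invariant exponents $1 + d/q - d/p$ and $d/p - d/q$ appear naturally as the convex combination weights in the $(L^p, L^{p^*})$ interpolation.

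First I would establish the endpoint Sobolev embedding $\|\varphi\|_{L^{p^*}(\Omega)} \leq C \|\varphi\|_{W^{1,p}(\Omega)}$. Because $\Omega$ is Lipschitz with compact boundary, a bounded linear extension operator $E : W^{1,p}(\Omega) \to W^{1,p}(\BR^d)$ is available, so the embedding on $\Omega$ follows at once from the classical Gagliardo--Nirenberg--Sobolev inequality on $\BR^d$ (which, when $p=1$, is itself a consequence of the Loomis--Whitney inequality, and for general $p$ follows by applying the case $p=1$ to $|\varphi|^{\gamma}$ with an appropriate $\gamma > 1$ and invoking H\"{o}lder).

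Second, for the given $q \in [p, p^*]$, determine $\theta \in [0,1]$ from the interpolation condition
\begin{equation*}
\frac{1}{q} = \frac{\theta}{p} + \frac{1-\theta}{p^*}.
\end{equation*}
A direct algebraic manipulation, using $p^* = pd/(d-p)$, yields $\theta = 1 + d/q - d/p$ and consequently $1-\theta = d/p - d/q$. Applying Lemma~\ref{lem-int} gives
\begin{equation*}
\|\varphi\|_{L^q(\Omega)} \leq \|\varphi\|_{L^p(\Omega)}^{\theta} \|\varphi\|_{L^{p^*}(\Omega)}^{1-\theta},
\end{equation*}
and inserting the Sobolev bound on the $L^{p^*}$ factor produces exactly \eqref{int-inq}.

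The main technical obstacle is the endpoint Sobolev embedding itself; everything else is bookkeeping. Standard references reduce this to a Lipschitz extension argument plus the whole-space inequality, and the restriction to $p < d$ (implicit in the assumption $q \le pd/(d-p)$ with the conventional reading $pd/(d-p) = \infty$ when $p = d$) is automatic from that setup. The remaining minor subtlety is the compactness of $\partial\Omega$, which ensures that a \emph{single} global extension operator with a uniform operator norm exists, so that the constant $C$ depends only on $\Omega, p, q, d$, as claimed.
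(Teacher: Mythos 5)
Your argument is correct, and it is the standard derivation of this Gagliardo--Nirenberg inequality: the exponent calculation $\theta = 1 + d/q - d/p$ from $1/q = \theta/p + (1-\theta)/p^*$ is verified by $1/p - 1/p^* = 1/d$, and combining the $L^p$--$L^{p^*}$ interpolation (Lemma~\ref{lem-int}) with the Sobolev embedding $W^{1,p}(\Omega)\hookrightarrow L^{p^*}(\Omega)$ (which uses the Lipschitz extension operator, valid because $\partial\Omega$ is compact) yields \eqref{int-inq} immediately. Note, though, that the paper gives no proof of this statement at all — it is quoted verbatim from Boyer--Fabrie \cite[p.~173]{BF:2013} as a reference lemma — so there is nothing in the paper to compare your route against; your proof is simply the textbook one.

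One small caveat worth naming explicitly: you correctly observe that the argument requires $p < d$ for $p^* = pd/(d-p)$ to be finite, and the case $p = d$ (with the convention $p^* = \infty$) is genuinely not covered by this route, since $W^{1,d}(\Omega) \not\hookrightarrow L^\infty(\Omega)$. The hypothesis ``$p\in[1,\infty]$'' in the quoted statement is therefore slightly too permissive as written; the intended range is $1 \le p < d$, which is also what the interval $[p, pd/(d-p)]$ tacitly presupposes. In the body of the paper the lemma is only ever invoked with $p = 2$ and $d = 3$, so this does not affect anything downstream, but your proof should state the restriction rather than leave it as ``implicit.''
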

	
	\begin{lemma}\cite[Lemma 1.11, p. 63]{BMR:2007}\label{lem-Korn}
		Let $\Omega \in \calC^{0,1}$ and $q \in (0,+\infty)$. Then there exists a positive constant $C$, depending only on $\Omega$ and $q$, such that for all $\bs{v} \in W^{1,q}(\Omega)$ which has the trace $tr \ \bs{v} \in L^2(\partial \Omega)$, the following inequality hold,
		\begin{align*}
		\norm{\bs{w}}_{W^{1,q}(\Omega)} &\leq C \left( \norm{\bs{Dv}}_{L^q(\Omega)} + \norm{tr \ \bs{v}}_{L^2(\partial \Omega)} \right),\\
		\norm{\bs{w}}_{W^{1,q}(\Omega)} &\leq C \left( \norm{\bs{Dv}}_{L^q(\Omega)} + \norm{\bs{v}}_{L^2(\Omega)} \right).
		\end{align*}
	\end{lemma}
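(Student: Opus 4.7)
The plan is to reduce both estimates to the classical Korn second inequality on Lipschitz domains and then absorb the lower-order term via a Peetre--Tartar compactness argument. Concretely, I would start from the well-known estimate
\[
\norm{\nabla \bs{v}}_{L^q(\Omega)} \leq C \bigl( \norm{\bs{Dv}}_{L^q(\Omega)} + \norm{\bs{v}}_{L^q(\Omega)} \bigr), \qquad q \in (1,\infty),
\]
valid on bounded Lipschitz domains, which can be proved either by Ne\v{c}as' negative-norm technique or via singular integral representations of $\partial_i\partial_j v_k$ in terms of first derivatives of $\bs{Dv}$. This reduces the problem to controlling the full $W^{1,q}$-norm by $\norm{\bs{Dv}}_{L^q(\Omega)}$ plus either $\norm{tr\,\bs{v}}_{L^2(\partial\Omega)}$ or $\norm{\bs{v}}_{L^2(\Omega)}$.

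Next, for the second (easier) inequality, I would argue by contradiction: suppose there is a sequence $\bs{v}_n \in W^{1,q}(\Omega)$ with $\norm{\bs{v}_n}_{W^{1,q}(\Omega)}=1$ and
\[
\norm{\bs{Dv}_n}_{L^q(\Omega)} + \norm{\bs{v}_n}_{L^2(\Omega)} \longrightarrow 0.
\]
By Rellich compactness $W^{1,q}(\Omega) \hookrightarrow\hookrightarrow L^q(\Omega)$, a subsequence converges strongly in $L^q$ to some $\bs{v}$ with $\bs{Dv}=0$. Hence $\bs{v}$ is a rigid motion, i.e.\ $\bs{v}(x)=\bs{a}+\bs{B}x$ with $\bs{B}$ antisymmetric. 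The assumption $\bs{v}_n\to 0$ in $L^2(\Omega)$ forces $\bs{v}\equiv 0$, and then the classical Korn inequality above yields $\nabla\bs{v}_n\to 0$ in $L^q$, contradicting $\norm{\bs{v}_n}_{W^{1,q}}=1$. For the first inequality, the argument is the same, except that compactness of the trace operator $W^{1,q}(\Omega)\hookrightarrow\hookrightarrow L^2(\partial\Omega)$ (valid for a Lipschitz domain with the obvious range of $q$) gives $tr\,\bs{v}=0$; since an affine map vanishing on the Lipschitz surface $\partial\Omega$ (which has non-empty interior in the $2$-dimensional sense) must be identically zero, again $\bs{v}\equiv 0$ and we conclude as before.

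The routine part is the classical Korn second inequality, which I would just quote. The main obstacle is the rigid-motion separation step: one must verify that the additional functional ($\norm{\cdot}_{L^2(\Omega)}$ or the $L^2$-trace) is nontrivial on every nonzero element of the finite-dimensional space of rigid motions. For $\norm{\cdot}_{L^2(\Omega)}$ this is immediate since $\Omega$ has positive measure; for the trace functional it relies on the fact that $\partial\Omega$ is a Lipschitz surface, so an affine map vanishing on $\partial\Omega$ must vanish identically. Once that is in place, the Peetre--Tartar scheme delivers both inequalities with a constant depending only on $\Omega$ and $q$.
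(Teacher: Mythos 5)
The paper does not actually prove Lemma~\ref{lem-Korn}; it is quoted verbatim from \cite{BMR:2007}, so there is no in-paper proof to compare against. Your compactness/Peetre--Tartar argument is the standard way to derive such Korn-type inequalities from the homogeneous second Korn inequality plus a lower-order coercive term, and the logic is sound for $q\in(1,\infty)$: boundedness in $W^{1,q}$ plus reflexivity gives a weakly convergent subsequence, Rellich gives strong $L^q$ convergence, weak lower semicontinuity forces $\bs{D}\bs{v}=0$ for the limit, the extra functional (the $L^2(\Omega)$-norm or the $L^2(\partial\Omega)$-trace) annihilates the finite-dimensional kernel of $\bs{D}$ (rigid motions vanishing on a set not contained in any affine hyperplane must vanish), and the classical Korn inequality then upgrades $L^q$ convergence to $W^{1,q}$ convergence, giving the contradiction. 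Two small points are worth flagging. First, the stated range $q\in(0,+\infty)$ should be read as $q\in(1,+\infty)$: your classical Korn step (and, by Ornstein's non-inequality, the conclusion itself) fails at $q=1$, and your compactness step uses reflexivity of $W^{1,q}$; this restriction is harmless since the paper only invokes the lemma with $q=2$ or $q=r>12/5$. Second, for small $q$ the trace $W^{1,q}(\Omega)\to L^2(\partial\Omega)$ need not be bounded, let alone compact (which is precisely why the hypothesis separately assumes $tr\,\bs{v}\in L^2(\partial\Omega)$); for the identification of the limiting trace it is cleaner to use the always-bounded trace into $L^q(\partial\Omega)$ together with the finite-measure inclusion $L^2(\partial\Omega)\hookrightarrow L^{\min(q,2)}(\partial\Omega)$, rather than to invoke compactness of the $L^2$-trace. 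With those adjustments, your proof is complete.
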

	
	\begin{thm}\cite[p328]{HT:1980}
		Let $\Omega$ be an arbitrary bounded domain, $\Omega \subset \BR^d$. Let $0 \leq t \leq s < \infty$ and $ \infty > q \geq \wti{q} > 1$. Then, the following embedding holds true:
		\begin{equation}\label{emb}
		W^{s,\wti{q}}(\Omega) \subset W^{t, q}(\Omega), \ s - \frac{d}{\wti{q}} \geq t - \frac{d}{q} \quad \qedsymbol
		\end{equation}
	\end{thm}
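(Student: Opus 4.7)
The plan is to reduce the stated fractional Sobolev embedding on a bounded domain to the corresponding statement on $\BR^d$, where it follows from classical integer-order embeddings combined with an interpolation argument.

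First, I would adopt Triebel's convention from \cite{HT:1980}, under which $W^{s,\wti q}(\Omega)$ is defined as the space of restrictions to $\Omega$ of functions in $W^{s,\wti q}(\BR^d)$, equipped with the quotient norm $\norm{u}_{W^{s,\wti q}(\Omega)}=\inf \norm{\tilde u}_{W^{s,\wti q}(\BR^d)}$ over all extensions $\tilde u$. With this convention the embedding on $\Omega$ is an immediate corollary of the global embedding on $\BR^d$: given $u\in W^{s,\wti q}(\Omega)$, take a near-optimal extension $\tilde u\in W^{s,\wti q}(\BR^d)$, apply the global embedding to obtain $\tilde u\in W^{t,q}(\BR^d)$, and restrict back to $\Omega$ to conclude $u\in W^{t,q}(\Omega)$ with controlled norm. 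This step bypasses any regularity assumption on $\partial\Omega$, which is essential since $\Omega$ is merely bounded.

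Second, I would establish the global embedding $W^{s,\wti q}(\BR^d)\hookrightarrow W^{t,q}(\BR^d)$. For integer $s=k$ and $t=k-m$ with $m\in\{0,1,\ldots,k\}$, the condition $s-d/\wti q\ge t-d/q$ becomes $\tfrac{1}{q}\le \tfrac{1}{\wti q}-\tfrac{m}{d}$, which is the classical Gagliardo-Nirenberg-Sobolev regime. Iterating the basic inequality $W^{1,p}(\BR^d)\hookrightarrow L^{p^\ast}(\BR^d)$ with $1/p^\ast=1/p-1/d$ (together with commutation with derivatives) yields the embedding at integer endpoints. For non-integer $s,t$, I would use the fact that on $\BR^d$ and for $1<p<\infty$ the scale $\{W^{\sigma,p}(\BR^d)\}_{\sigma\ge 0}$ coincides with the Bessel potential scale and forms a complex interpolation scale: $[W^{s_0,\wti q},W^{s_1,\wti q}]_\theta = W^{s,\wti q}$ for $s=(1-\theta)s_0+\theta s_1$, and analogously on the target side. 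Picking integer bracketing values $s_0\le s\le s_1$ and $t_0\le t\le t_1$ for which the integer embedding already applies, complex bilinear interpolation of the bounded inclusions at the endpoints produces the embedding at the intermediate pair $(s,t)$. Both the smoothness index $\sigma$ and the scaling quantity $\sigma-d/p$ are affine in the interpolation parameter, so the condition $s-d/\wti q\ge t-d/q$ is preserved along the interpolation segment.

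The main obstacle I anticipate is the absence of any regularity of $\partial\Omega$: without at least a Lipschitz assumption there is no Stein-type extension operator, so an intrinsic formulation via Slobodeckij seminorms on $\Omega$ would not reduce directly to $\BR^d$. The restriction-based definition circumvents this difficulty entirely and is precisely the convention recorded in \cite{HT:1980}; once that convention is in place the theorem is essentially a corollary of the $\BR^d$ embedding, and no further argument is needed beyond citing the Sobolev-Gagliardo-Nirenberg inequality and the interpolation property of the scale.
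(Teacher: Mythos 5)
The paper itself offers no proof here — the theorem is stated with a citation to Triebel and a terminal $\qedsymbol$ — so the only thing to evaluate is whether your argument would actually establish the result. Your framing (adopt the restriction/quotient definition of $W^{s,\tilde q}(\Omega)$, extend to $\BR^d$, prove the global embedding, restrict back) is the right skeleton and correctly handles the arbitrary-boundary issue, and the reduction of the integer case to iterated Gagliardo–Nirenberg–Sobolev is fine.

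The gap is in the interpolation step. You assert that for $1<p<\infty$ the scale $\{W^{\sigma,p}(\BR^d)\}_{\sigma\ge 0}$ ``coincides with the Bessel potential scale and forms a complex interpolation scale.'' That identification holds for integer $\sigma$, but for non-integer $\sigma$ and $p\ne 2$ the Sobolev–Slobodeckij space $W^{\sigma,p}$ is the Besov space $B^\sigma_{p,p}$, whereas the Bessel potential space $H^{\sigma,p}$ is the Triebel–Lizorkin space $F^\sigma_{p,2}$; these are genuinely different. Consequently $[W^{k_0,p},W^{k_1,p}]_\theta = H^{\sigma,p}\ne W^{\sigma,p}$ for non-integer $\sigma$, so complex interpolation of the integer-endpoint embeddings lands you in the wrong scale on both the source and target side. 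You cannot in general repair this by sandwiching $B^\sigma_{p,p}$ between Triebel–Lizorkin spaces either, since the inclusion $B^\sigma_{p,p}\hookrightarrow F^\sigma_{p,2}$ holds only for $p\le 2$ and reverses for $p\ge 2$; the chain $B^s_{\tilde q,\tilde q}\hookrightarrow F^s_{\tilde q,2}\hookrightarrow F^t_{q,2}\hookrightarrow B^t_{q,q}$ would therefore need $\tilde q\le 2\le q$, which is not part of the hypotheses. The actual route in Triebel's book proves the embedding directly at the level of Besov (and Triebel–Lizorkin) spaces via Littlewood–Paley/Fourier-multiplier estimates — in effect, $B^{s}_{p_0,r_0}\hookrightarrow B^{t}_{p_1,r_1}$ under $s-d/p_0\ge t-d/p_1$ with the appropriate monotonicity of the microlocal indices — and then specializes to $W^{s,p}=B^s_{p,p}$; real interpolation $(\,\cdot\,,\,\cdot\,)_{\theta,p}$ of integer Sobolev spaces (which does produce $B^\sigma_{p,p}$) would be a workable substitute for your complex-interpolation step if you also track the second Besov index. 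As written, though, the interpolation step does not establish the claimed embedding between $W^{s,\tilde q}$ and $W^{t,q}$.
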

	
	\begin{lemma}\label{lem-int-w}
		Let $\Omega \subset \BR^3$ be a bounded domain. Then
		\begin{equation}
		\norm{u}_6 \leq c_0 \norm{u}_2^{\frac{1}{3}}\norm{u}_{W^{1,3}(\Omega)}^{\frac{2}{3}},
		\end{equation}
		for any function $u \in W^{1,3}(\Omega)$.
	\end{lemma}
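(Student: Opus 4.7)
The asserted bound is a special case of the Gagliardo--Nirenberg interpolation inequality. The exponent $1/3$ is forced by scaling: for $u_\lambda(x) := u(\lambda x)$ on $\BR^3$, one has $\norm{u_\lambda}_{L^6} = \lambda^{-1/2}\norm{u}_{L^6}$, $\norm{u_\lambda}_{L^2} = \lambda^{-3/2}\norm{u}_{L^2}$, and $\norm{\nabla u_\lambda}_{L^3} = \norm{\nabla u}_{L^3}$ (scale invariant). An inequality of the form $\norm{u}_6 \leq c \norm{u}_2^a \norm{\nabla u}_3^{1-a}$ therefore forces $-1/2 = -3a/2$, i.e.\ $a = 1/3$. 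This pins down the target exponents, and the rest of the plan is to make the inequality rigorous on $\Omega$.

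The first route is to use the Lipschitz assumption $\Omega \in \calC^{0,1}$ to invoke a standard bounded extension operator $E:W^{1,3}(\Omega) \to W^{1,3}(\BR^3)$ whose range consists of functions supported in a fixed compact set $K \supset \overline{\Omega}$, satisfying simultaneously $\norm{Eu}_{L^2(\BR^3)} \leq c\norm{u}_{L^2(\Omega)}$ and $\norm{Eu}_{W^{1,3}(\BR^3)} \leq c\norm{u}_{W^{1,3}(\Omega)}$. Applying the classical Gagliardo--Nirenberg estimate in $\BR^3$ to $Eu$, namely $\norm{Eu}_{L^6} \leq c\norm{Eu}_{L^2}^{1/3}\norm{\nabla Eu}_{L^3}^{2/3}$, and restricting back to $\Omega$ delivers the claim with $\norm{\nabla Eu}_{L^3} \leq c\norm{u}_{W^{1,3}(\Omega)}$.

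An alternative, purely intrinsic route is to appeal to the additive form of Gagliardo--Nirenberg valid on bounded Lipschitz domains,
\begin{equation*}
\norm{u}_6 \leq c\,\norm{u}_{W^{1,3}(\Omega)}^{2/3}\norm{u}_{L^2(\Omega)}^{1/3} + c\,\norm{u}_{L^2(\Omega)},
\end{equation*}
and absorb the lower-order tail using the boundedness of $\Omega$. Indeed, by H\"older $\norm{u}_2 \leq \abs{\Omega}^{1/6}\norm{u}_3 \leq c\,\norm{u}_{W^{1,3}(\Omega)}$, and hence $\norm{u}_2 = \norm{u}_2^{1/3}\norm{u}_2^{2/3} \leq c\,\norm{u}_2^{1/3}\norm{u}_{W^{1,3}(\Omega)}^{2/3}$, which absorbs the additive term into the desired form.

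The main ``obstacle'' here is expository rather than technical: this is a classical inequality, so the real task is to cite (or briefly verify) the Gagliardo--Nirenberg estimate together with the extension estimates for Lipschitz domains, rather than to produce a novel argument. Both ingredients are standard, and the exponent bookkeeping above shows that the scaling matches and no spurious factors appear.
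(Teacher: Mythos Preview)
Your argument is correct, but the paper takes a different, more self-contained route. Rather than invoking Gagliardo--Nirenberg directly (via extension or the additive form), the paper bootstraps from the two lemmas already stated just above it: first apply \eqref{int-inq} with $p=3$, $q=6$, $d=3$ to get $\norm{u}_6 \leq c_1 \norm{u}_3^{1/2}\norm{u}_{W^{1,3}(\Omega)}^{1/2}$, then use the $L^p$ interpolation \eqref{int-Lp} to get $\norm{u}_3 \leq \norm{u}_2^{1/2}\norm{u}_6^{1/2}$, substitute, and solve for $\norm{u}_6$. The advantage of the paper's approach is that it is entirely internal to the appendix---no extension operator or external Gagliardo--Nirenberg citation is needed---whereas your route is a cleaner one-shot application of a heavier (though still classical) tool. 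Note also that the statement as written only assumes ``bounded domain,'' but both your proof and the paper's implicitly rely on the Lipschitz hypothesis $\Omega \in \calC^{0,1}$ (yours through the extension operator, the paper's through \eqref{int-inq}); this is harmless in context since the paper imposes $\Omega \in \calC^{0,1}$ globally.
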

	\begin{proof}
		By interpolation result \eqref{int-inq}, we obtain
		\begin{equation}
		\norm{u}_6 \leq c_1 \norm{u}_3^{\frac{1}{2}}\norm{u}_{W^{1,3}(\Omega)}^{\frac{1}{2}}.
		\end{equation}
		Then by \eqref{int-Lp}, we obtain
		\begin{equation}
		\norm{u}_3 \leq \norm{u}_2^{\frac{1}{2}}\norm{u}_6^{\frac{1}{2}}.
		\end{equation}
		\nin By combining above two inequalities, we obtain the result.
	\end{proof}

	\tcr{
		\begin{lemma}\label{lam-ub}
			Let the dimension of $\Omega$ be $d$. Then the eigenvalues $\{ \lambda_j \}$ of the problem \eqref{eigen} are bounded above by $cj^{\rfrac{2}{d}}$ where $c> 0$.
		\end{lemma}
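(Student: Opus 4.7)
The plan is to invoke the Courant--Fischer min--max principle. Since $\calA$ is positive, self-adjoint, and has compact resolvent on $H$, testing \eqref{eigen} against $\bs{v} \in V$, integrating by parts, and substituting the boundary condition shows that the eigenvalue problem is equivalent to $a(\bs{w}_j, \bs{v}) = \lambda_j (\bs{w}_j,\bs{v})_H$ for all $\bs{v}\in V$, with
\begin{equation*}
a(\bs{u},\bs{u}) \;=\; \int_{\Omega}|\bs{Du}|^{2}\,dx + \alpha\int_{\partial\Omega}|\bs{u}|^{2}\,dS,\qquad (\bs{u},\bs{u})_H = \|\bs{u}\|_{L^2(\Omega)}^2 + \beta\|\bs{u}\|_{L^2(\partial\Omega)}^2
\end{equation*}
(the $\lambda\beta$-term in the boundary condition is absorbed into the $H$-inner product on the right-hand side). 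Hence
\begin{equation*}
\lambda_{j} \;=\; \inf_{\substack{E\subset\calD(\calA^{1/2})\\ \dim E=j}}\;\sup_{\bs{0}\neq\bs{u}\in E}\;\frac{a(\bs{u},\bs{u})}{\|\bs{u}\|_{H}^{2}},
\end{equation*}
and it suffices to exhibit one $j$-dimensional trial subspace on which this Rayleigh quotient is $\lesssim j^{2/d}$.

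I would build the trial subspace from compactly supported solenoidal vector fields, so that all boundary integrals vanish and the problem reduces to the Dirichlet setting. Since $\Omega$ is a bounded Lipschitz domain of positive measure, a standard packing argument produces $j$ pairwise disjoint balls $B(\bs{x}_{i},r_{0})\subset\Omega$ with $r_{0}\sim j^{-1/d}$. Starting from a fixed reference field $\bs{\phi}\in C_{c}^{\infty}(B(0,1);\BR^{d})$ with $\div\bs{\phi}=0$ and $\|\bs{\phi}\|_{L^{2}}=1$ (for instance $\bs{\phi}=\nabla\times(\eta\bs{e}_{k})$ for a smooth scalar bump $\eta$), define rescaled and translated copies
\begin{equation*}
\bs{\phi}_{i}(\bs{x}) := r_{0}^{-d/2}\bs{\phi}\bigl(r_{0}^{-1}(\bs{x}-\bs{x}_{i})\bigr),\qquad i=1,\dots,j.
\end{equation*}
By construction, the $\bs{\phi}_{i}$ are smooth, divergence-free, supported away from $\partial\Omega$ (so in particular $\bs{\phi}_i\cdot\bs{n}=0$), $L^{2}$-normalized, pairwise $L^{2}$-orthogonal by disjointness of supports, and satisfy $\|\nabla\bs{\phi}_{i}\|_{L^{2}}=r_{0}^{-1}\|\nabla\bs{\phi}\|_{L^{2}}\lesssim j^{1/d}$.

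Setting $E_{j}:=\mathrm{span}\{\bs{\phi}_{1},\dots,\bs{\phi}_{j}\}$, every $\bs{u}=\sum c_{i}\bs{\phi}_{i}\in E_{j}$ has vanishing trace on $\partial\Omega$, whence $\|\bs{u}\|_{H}^{2}=\sum c_{i}^{2}$, while orthogonality together with $|\bs{D}\bs{\phi}_{i}|\le|\nabla\bs{\phi}_{i}|$ yields
\begin{equation*}
a(\bs{u},\bs{u}) \;=\; \sum_{i}c_{i}^{2}\|\bs{D}\bs{\phi}_{i}\|_{L^{2}(\Omega)}^{2} \;\lesssim\; r_{0}^{-2}\sum_{i}c_{i}^{2} \;\lesssim\; j^{2/d}\sum_{i}c_{i}^{2}.
\end{equation*}
The min--max principle therefore delivers $\lambda_{j}\le c\,j^{2/d}$. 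The only mildly delicate ingredient is producing the reference divergence-free bump $\bs{\phi}$, which is entirely standard (e.g., any curl of a smooth compactly supported vector potential works); the packing argument uses only boundedness and positive measure of $\Omega$, both granted throughout the paper.
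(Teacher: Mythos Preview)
Your argument is correct and shares the paper's high-level strategy: invoke the min--max characterization and restrict to divergence-free test functions with zero trace so that the boundary terms drop out. The paper then simply observes that this bounds $\lambda_j$ by the $j$-th Dirichlet Stokes eigenvalue $\mu_j$ and cites the known asymptotics $\mu_j \sim j^{2/d}$ from \cite{CF:1988}. You instead carry out the Weyl-type upper bound directly, packing $j$ disjoint balls of radius $\sim j^{-1/d}$ and rescaling a fixed solenoidal bump. Your route is more self-contained and avoids the external citation, at the cost of a little extra bookkeeping; the paper's route is shorter but leans on a black-box result. Either is perfectly adequate here.
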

		\begin{proof}
			The asymptotic behavior of the eigenvalues $\lambda_j$ as $j \to \infty$ can be estimated using the Rayleigh quotient
			\begin{equation}
			\calR(\bs{u}) = \frac{\ds \int_{\Omega} \abs{\bs{Du}}^2dx + \alpha\int_{\partial \Omega} \abs{\bs{u}}^2dS}{\ds \int_{\Omega} \abs{\bs{u}}^2dx + \beta \int_{\partial \Omega} \abs{\bs{u}}^2dS}.
			\end{equation}
			\nin With this notion we have
			\begin{equation}
			\lambda_j = \inf_{M \in X_j(V)} \sup_{\bs{u} \in M \backslash \{0\}} \calR(\bs{u}),
			\end{equation}
			\nin where $X_j(V)$ is the $j$-dimensional subspaces of the space $V$ with divergence free condition and zero normal component. Then we estimate 
			\begin{equation}
			\calR(\bs{u}) \leq c \frac{\norm{\bs{u}}^2_{W^{1,2}(\Omega)}}{\norm{\bs{u}}^2_2}. 
			\end{equation}
			\nin Therefore
			\begin{equation}
			\lambda_j \leq c \inf_{M \in X_j(W)} \sup_{\bs{u} \in M \backslash \{0\}} \frac{\norm{\bs{u}}^2_{W^{1,2}(\Omega)}}{\norm{\bs{u}}^2_2} = c \mu_k,
			\end{equation}
			\nin where space $W$ with divergence free and zero boundary conditions, i.e. $W \subset V$. Now this upper-bound $c\mu_k$ is related to the following Stokes-eigenvalue problem
			\begin{align*}
			\Delta \bs{u} + \nabla \pi &= \mu_k \bs{u} \quad \text{ in } \Omega,\\
			\div \ \bs{u} &= 0 \quad \text{ in } \Omega,\\
			\bs{u} &= 0 \quad \text{ on } \partial \Omega.
			\end{align*}
			\nin It is shown in \cite{CF:1988} that $\mu_k \sim k^{\rfrac{2}{d}}$. Hence we have $\lambda_j \leq c j^{\rfrac{2}{d}}$.
	\end{proof}}
	
	\tcr{The authors have no conflicts of interest to declare that are relevant to the content of this article. Authors would like to thank anonymous referees for their valuable comments.}
	%%%%%%%%%%%%%%%%%%%%%%%%%%%%%%%%%%%%%%%%%%%%%%%%%%%%%%%%%%%%
	
	\bibliographystyle{plain}
	\bibliography{Dynbc_arXiv}
	
\end{document}